\documentclass[12pt]{amsart}

\usepackage{verbatim, amssymb, enumitem, mathtools,mathrsfs}
\usepackage{color}

\usepackage[breaklinks=true,colorlinks=true,linkcolor=blue,citecolor=red,urlcolor=blue,psdextra,pdfencoding=auto]{hyperref}
\allowdisplaybreaks

\renewcommand \a{\alpha}
\renewcommand \b{\beta}

\newcommand \la{\lambda}
\newcommand \ve{\varepsilon}
\newcommand \id{\mathrm{id}}
\newcommand \br{\mathbb{R}}
\newcommand \bc{\mathbb{C}}
\newcommand \bh{\mathbb{H}}
\newcommand \bo{\mathbb{O}}

\newcommand \rk{\operatorname{rk}}
\newcommand \Ker{\operatorname{Ker}}

\newcommand \End{\operatorname{End}}

\renewcommand \Re{\operatorname{Re}}
\newcommand \Span{\operatorname{Span}}
\newcommand \Tr{\operatorname{Tr}}

\newcommand \SO{\mathrm{SO}}
\newcommand \SU{\mathrm{SU}}
\newcommand \SL{\mathrm{SL}}
\newcommand \Sp{\mathrm{Sp}}
\newcommand \Ff{\mathrm{F}_4}
\newcommand \Spin{\mathrm{Spin}}

\newcommand \cJ{\mathcal{J}}

\newcommand \cI{\mathcal{I}}

\newcommand \cV{\mathcal{V}}

\newcommand \cH{\mathcal{H}}
\newcommand \cL{\mathcal{L}}
\newcommand \cT{\mathcal{T}}
\newcommand \cK{\mathcal{K}}
\newcommand \cM{\mathcal{M}}
\newcommand \cS{\mathcal{S}}

\newcommand \sK{\mathsf{K}}
\newcommand \sS{\mathsf{S}}
\newcommand \sL{\mathsf{L}}
\newcommand \sT{\mathsf{T}}
\newcommand \sW{\mathsf{W}}

\newcommand \bg{\mathfrak{b}}

\newcommand\g{\mathfrak g}

\newcommand\h{\mathfrak h}

\newcommand\m{\mathfrak m}
\newcommand \so{\mathfrak{so}}
\newcommand \spg{\mathfrak{sp}}

\newcommand \ug{\mathfrak{u}}
\newcommand \su{\mathfrak{su}}

\newcommand \f{\mathfrak{f}}
\newcommand \p{\mathfrak{p}}
\renewcommand\t{\mathfrak t}

\newcommand \ir{\mathrm{i}}

\newcommand \Sym{\operatorname{Sym}}

\newcommand \ad{\operatorname{ad}}

\newcommand \diag{\operatorname{diag}}

\newcommand \pd{\partial}
\newcommand \rL{\mathrm{L}}
\newcommand \rR{\mathrm{R}}

\newcommand \<{\langle}
\renewcommand \>{\rangle}
\newcommand \ip{\<\cdot,\cdot\>}

\newtheorem{theorem}{Theorem}
\newtheorem*{theorem*}{Theorem}
\newtheorem{corollary}{Corollary}
\newtheorem*{corollary*}{Corollary}
\newtheorem*{conj*}{Conjecture}
\newtheorem{lemma}{Lemma}
\newtheorem{proposition}{Proposition}
\newtheorem*{prop*}{Proposition}

\theoremstyle{definition}

\newtheorem*{definition*}{Definition}

\theoremstyle{remark}
\newtheorem{remark}{Remark}

\newtheorem*{notation*}{Notation}
\newtheorem*{algorithm*}{Algorithm}
\newtheorem*{example*}{Example}

\makeatletter
\@namedef{subjclassname@2020}{%
  \textup{2020} Mathematics Subject Classification}
\makeatother

\begin{document}

\title{On Killing tensors on Riemannian symmetric spaces}

\author{Vladimir Matveev}
\address{Fakult\"{a}t f\"{u}r Mathematik und Informatik, Friedrich-Schiller-Universit\"{a}t, 07737 Jena, Germany}
\email{vladimir.matveev@uni-jena.de}
\thanks{The first named author is thankful to La Trobe University for hospitality. \\ \hspace*{9.8pt} The first named  author was partially supported by the DFG (projects 455806247 and 529233771).} 

\author{Yuri Nikolayevsky}
\address{Department of Mathematical and Physical Sciences, La Trobe University, VIC 3086, Australia} 
\email{Y.Nikolayevsky@latrobe.edu.au}
\thanks{The second named author is thankful to Friedrich-Schiller-Universit\"{a}t for hospitality. \\ \hspace*{9.8pt} The authors were partially supported by ARC Discovery Grant DP210100951.} %

\subjclass[2020]{53C35, 53B20, 53D25}

\keywords{Killing tensor field, Symmetric space} %


\begin{abstract}
A Killing tensor field on a Riemannian space corresponds to an integral of the geodesic flow polynomial in momenta. A Killing tensor field is called \emph{decomposable} if it is a polynomial in Killing vector fields. In this paper, we first prove that the study of Killing tensor fields on symmetric spaces can be reduced to the case of compact irreducible ones. Then we introduce the class of \emph{top slot} Killing tensor fields.
We obtain an explicit and elegant description of such tensor fields and prove that the quadratic Killing tensor fields are spanned by the top-slot ones. We also show that quadratic Killing tensor fields on the quaternionic projective space and on the Cayley projective space are spanned by the indecomposable ones constructed in our earlier paper and the decomposable ones. This completes the classification of quadratic Killing tensor fields on Riemannian symmetric spaces of rank one.
\end{abstract} 

\maketitle

\section{Introduction}
\label{s:intro}

A \emph{covariant Killing tensor field} $L=L(x)_{i_1\dots i_d}$ of rank $d \ge 1$ on a Riemannian manifold $(M,ds^2=g_{ij}dx^i dx^j)$ is a covariant symmetric tensor field that satisfies the Killing equation
\begin{equation}\label{eq:defK}
  L_{(i_1\dots i_d,j)}=0,
\end{equation}
where the comma denotes the covariant derivative and the parentheses denote the symmetrisation by all indices. This definition is equivalent to the fact that the function $\xi\in T_xM \mapsto L(x)_{i_1\dots i_d} \xi^{i_1} \cdots \xi^{i_d}$ polynomial in the velocities is an integral of the geodesic flow of $(M,ds^2)$: for any naturally parameterised geodesic $s \mapsto \gamma(s)$ of $(M,ds^2)$, the function $s \mapsto L(\gamma(s))_{i_1\dots i_d} (\dot{\gamma}(s))^{i_1} \dots (\dot{\gamma}(s))^{i_d}$ is constant. Another equivalent definition is as follows. A contravariant symmetric tensor field $K$ of rank $d$ defines a function $\cK$ on the cotangent bundle $T^*M$ which is a homogeneous polynomial of degree $d$ in the momenta $p_i$. For the contravariant metric tensor $g^{ij}$, this construction gives twice the Hamiltonian $\cH= \frac12 \sum_{ij} g^{ij}p_ip_j$. A contravariant symmetric tensor field $K$ is Killing if and only if the corresponding function $\cK$ on $T^*M$ \emph{Poisson commute} with the Hamiltonian (see Subsection~\ref{ss:Kt}). It is well known that a contravariant tensor field $K$ is Killing if and only if the corresponding covariant tensor field $K^\flat$ is.

Contravariant Killing tensor fields of rank $d=1$ are called \emph{Killing vector fields}. It is well known that a vector field is Killing if and only if the $1$-parametric group of diffeomorphisms of $M$ which it generates consists of isometries. We know no such nice geometric description for Killing tensor fields of rank $d \ge 2$.

The space of all contravariant Killing tensor fields on $(M,ds^2)$ forms an associative, commutative, graded algebra $\sK(M)$ with respect to the symmetric tensor product. It contains a subalgebra $\sS(M)$ generated by Killing vector fields. The elements of $\sS(M)$ are called \emph{decomposable}.

A general Riemannian manifold may have indecomposable Killing tensor fields, even if one disregards polynomials in the metric tensor. A classical example is due to Darboux: the $2$-dimensional Liouville metric $ds^2=(\la(x)+\mu(y)) (dx^2 + dy^2)$ admits quadratic Killing tensor fields not proportional to the metric, but, in general, has trivial isometry group. On the other hand, any Killing tensor field on a space of constant curvature is decomposable~\cite{Tho,ST1,Tak}, and the structure of the algebra of Killing tensor fields is completely understood.

In this paper, we discuss the structure of the space of Killing tensor fields on a Riemannian symmetric space. This is the next natural step, as symmetric spaces are the next class of Riemannian spaces after spaces of constant curvature in terms of complexity. Symmetric spaces have a very large isometry group, and hence a very rich algebra of decomposable Killing tensor fields. Moreover, a symmetric space can be viewed as a test laboratory for a general theory of Killing tensors, viewed from the perspective of the geometric theory of PDEs. Indeed, the equation \eqref{eq:defK} is of finite type and closes after $d$ prolongations; see, e.g.,~\cite[Theorem~4.3]{Tho} and \cite{EL,MSS}. Therefore, a Killing tensor field of rank $d$ is completely determined by its $d$-th jet at a single point. Further prolongations give the integrability conditions, which form a system of algebraic relations on the $d$-jet whose coefficients depend on the curvature of the metric and its covariant derivatives. In the constant curvature case, these algebraic relations vanish identically \cite{Tho}, and for a general metric they are almost unmanageable for $d>2$; see, e.g.,~\cite[Appendix~A]{WT}. However, on locally symmetric spaces, the covariant derivatives of the curvature vanish, so the integrability conditions depend only on the curvature tensor and can be calculated explicitly; see, e.g., Theorem \ref{t:quadratic} and Proposition \ref{p:top}.

Our results are motivated by and give a partial answer to \cite[Question~3.9]{BMMT}: ``\emph{Is every Killing tensor field on a symmetric space decomposable?}''
At present, we know that the answer to this question is in the positive for the following symmetric spaces:
\begin{itemize}
  \item
  all Killing tensor fields on the spaces of constant curvature~\cite{Tho,ST1,Tak};

  \item
  all Killing tensor fields on the complex projective spaces~\cite[Corollary~5]{East}, \cite[Theorem~2.2]{ST2};

  \item
  quadratic Killing tensor fields on some classical symmetric spaces: the classical groups $\SO(n)$~\cite{MNN}, the real Grassmannians $\SO(p+q)/\SO(p) \times \SO(q)$, the spaces $\SU(n)/\SO(n)$~\cite{NN} and the space $\SU(6)/\Sp(3)$~\cite{EL}.

  \item
  quadratic Killing tensor fields on some exceptional symmetric spaces: the group $\mathrm{G}_2$ and the space $\mathrm{G}_2/\SO(4)$~\cite{NN}.
\end{itemize}

Perhaps somewhat unexpectedly, the answer is in the negative already for quadratic Killing tensor fields on
\begin{itemize}
  \item
  the Cayley projective plane $\bo P^2 = \Ff/\Spin(9)$ \cite{MN1}.

  \item
  the quaternionic projective spaces $\bh P^n=\Sp(n+1)/(\Sp(n)\Sp(1))$ with $n \ge 3$ \cite{MN1}.

  \item
  the space $\mathrm{E}_6/\mathrm{F}_4$~\cite{EL}. 
\end{itemize}

\smallskip
An additional motivation for studying the decomposability of Killing tensors on symmetric and homogeneous spaces comes from Mishchenko--Fomenko type problems. For a homogeneous metric, one seeks a complete Poisson-commutative subalgebra of integrals, thereby obtaining Liouville integrability; see, e.g., \cite{BJ,MF}, and \cite{Rybnikov} for quantum analogues. It is natural to seek such commuting integrals within a prescribed class, usually the class of integrals polynomial in the momenta, since the Mishchenko--Fomenko problem is trivial in the $C^\infty$ category \cite[\S 6]{BFIMZ}. From this perspective, indecomposable Killing tensors are especially valuable, as they may lead to Liouville-integrable systems with integrals of lower degree in the momenta than those produced by standard methods.

\smallskip

Our paper  contains the following results.  
First, we establishing several important structural properties of the algebra of Killing tensor fields (see Subsection~\ref{ss:Kt} for unexplained notation).

\begin{theorem} \label{t:prod}
  Let $(M,g)=(M_0,g_0) \times (M_1,g_1) \times \dots \times (M_m,g_m)$ be the de Rham decomposition of a connected, simply connected, globally symmetric space $(M,g)$, where $(M_0,g_0)$ is Euclidean and $(M_i,g_i)$ are irreducible for $i=1, \dots, m$. Then $\sK(M) = \sK(M_0) \otimes \sK(M_1) \otimes \dots \otimes \sK(M_m)$.
\end{theorem}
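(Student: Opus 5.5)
The plan is to identify Killing tensors with polynomial first integrals of the geodesic flow and to split such an integral along the factors. On $T^*M = T^*M_0 \times \dots \times T^*M_m$ the Hamiltonian is $\cH=\cH_0+\dots+\cH_m$, and the $\cH_i$ Poisson commute pairwise. The inclusion $\sK(M_0)\otimes\dots\otimes\sK(M_m)\subset\sK(M)$ is immediate: a product of functions, each depending only on the variables of one factor and commuting with the corresponding $\cH_i$, commutes with every $\cH_j$ and hence with $\cH$.

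For the converse, take a Killing tensor $K$ of rank $d$ on $M$, with function $\cK$ on $T^*M$. We decompose it by multidegree in the momenta of the factors, $\cK=\sum_{\a}\cK_{\a}$ with $\a=(a_0,\dots,a_m)$ and $|\a|=d$. Each $\cH_i$ is homogeneous of degree $2$ in $p^{(i)}$ and of degree $0$ in the other momenta. Hence $\{\cH_i,\cdot\}$ raises the $p^{(i)}$-degree by one and preserves the other degrees, so $\{\cH,\cK\}=0$ splits into $\sum_i\{\cH_i,\cK_\a\}=0$ for each $\a$, with the $i$-th summand lying in multidegree $\a+e_i$. These components need not vanish separately: one summand of multidegree $\a+e_i$ may cancel against one coming from $\a'=\a+e_i-e_j$. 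So the main step, which I expect to be the real obstacle, is to prove that $\{\cH_i,\cK\}=0$ for each $i$ separately. In other words, every Killing tensor on $M$ is also Killing for each partial metric $g_i$.

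For this key step I would use the symmetric-space structure. The Killing equation is of finite type, so $K$ is determined by its $d$-jet at a point $o$, and the integrability conditions are algebraic in the curvature $R=R_0\oplus\dots\oplus R_m$ (with $R_0=0$). I would pass to the description of Killing tensors on $G/H$ via $G$-equivariant data: $\sK(M)$ corresponds to the $\mathrm{Ad}$-invariants in the appropriate space of polynomials on $\g$, or equivalently to sections determined by polynomials on $\g^*$ restricted along $\m$. The isometry algebra splits as $\g=\g_0\oplus\g_1\oplus\dots\oplus\g_m$; for the irreducible factors this holds since the factors are nonisomorphic or are permuted only by isometries, and the Euclidean part is handled separately. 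The transvection flows of each factor commute with $\cH_j$. The infinitesimal version of "the geodesic flow of $M_i$ in its factor" is generated by the Casimir-like element $\cH_i$. An alternative argument: the geodesic flow of $\cH_i$ is the flow of $\cH$ restricted to initial data in which only the $i$-th momentum is varied. More precisely, $\cK$ is constant along all geodesics of $M$. The product geodesic $(\gamma_0(t),\dots,\gamma_m(t))$, reparametrized with arbitrary independent speeds $c_i\gamma_i(c_it)$, is still a geodesic. By homogeneity of each $\cK_\a$ in $p^{(i)}$, we get $\sum_\a c^\a\,\cK_\a(\text{flow}_{s_0,\dots,s_m})$ constant, where the flow runs through separate times $s_i=c_it$. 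I would then argue that constancy along the $(m{+}1)$-parameter commuting family of flows of the $\cH_i$ follows from constancy along every direction $\sum c_i X_{\cH_i}$ with $c_i$ arbitrary. This follows because $\cK\circ\Phi^{\cH}_{c t}$ is constant for all $c$, which gives $\sum_i c_i\{\cH_i,\cK\}=0$ for all $c$. This is cleaner than the jet approach, and it is the route I would try first.

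Given $\{\cH_i,\cK\}=0$ for all $i$, each multihomogeneous part $\cK_\a$ also commutes with every $\cH_i$, by the degree argument above. Fix the momenta and positions of all factors except the $i$-th. Then $\cK_\a$ is a polynomial in $p^{(i)}$, homogeneous of degree $a_i$, whose coefficients are functions on $M$. Commuting with $\cH_i$ involves only derivatives in $x^{(i)}$ and $p^{(i)}$. Commuting with $\cH_j$ for $j\ne i$, applied to coefficient functions, shows that the coefficients are Killing tensors in the other variables. Hence $\cK_\a$ lies in $\bigotimes_i \sK_{a_i}(M_i)$, computed pointwise in the variables of the other factors. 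Finite-dimensionality of each $\sK_{a}(M_i)$, which holds by finite type, lets us expand $\cK_\a$ in a basis of $\sK_{a_0}(M_0)$. The coefficients are functions on $T^*(M_1\times\dots\times M_m)$ that are Killing there, and induction on $m$ finishes the proof.
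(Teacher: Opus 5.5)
\textbf{There is a genuine gap.} You correctly isolate the crux, $\{\cH_i,\cK\}=0$ for each factor separately, but the argument you give for it is circular. Write $\sigma_c$ for the rescaling $p^{(i)}\mapsto c_ip^{(i)}$ of the factor momenta. The reparametrized curve $(\gamma_0(c_0t),\dots,\gamma_m(c_mt))$ is a geodesic of $M$ only together with the rescaled momenta, i.e.\ $\Phi^{\cH}_t\circ\sigma_c=\sigma_c\circ\Phi^{\sum_ic_i\cH_i}_t$. So what is constant along the flow of $\sum_ic_i\cH_i$ is $\cK\circ\sigma_c=\sum_\alpha c^\alpha\cK_\alpha$, not $\cK$. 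You wrote these weights one sentence earlier and then dropped them. Differentiating gives $\sum_{i,\alpha}c_ic^\alpha\{\cH_i,\cK_\alpha\}=0$. The coefficient of $c^\beta$ is $\sum_i\{\cH_i,\cK_{\beta-e_i}\}=0$, which is exactly the multidegree-$\beta$ component of $\{\cH,\cK\}=0$ that you already had. The cancellation between $\alpha$ and $\alpha+e_i-e_j$ that you were worried about is left untouched.

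No argument that uses only the product structure can work. Yours applies verbatim to $\br\times\br$, where the rotation field $\cK=xp_y-yp_x$ is Killing but $\{\tfrac12p_x^2,\cK\}=-p_xp_y\neq0$; this is why the flat factor must be kept as one block. Likewise, \cite[Section~4]{MN2} gives a product without flat factors for which the tensor product formula fails, and your argument (including your final step) would "prove" it there too. A global input is needed.

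The paper obtains it from \cite[Theorem~1.1]{MN2}, which holds when one factor is compact. Roughly, in the triangular system $\{\cH_i,\cK_a\}+\{\cH',\cK_{a+1}\}=0$ graded by the $p^{(i)}$-degree $a$, compactness prevents the lower components from growing linearly along the flow of $\cH_i$, as $xp_y$ does above. The remaining case, where all factors are noncompact, is reduced to this one by Theorem~\ref{t:dual} and the explicit real isomorphisms of Remark~\ref{rem:dualR}. These replace each irreducible factor by its compact dual and respect the product structure.

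Your Lie-theoretic alternative does not supply the missing input either. The splitting $\g=\bigoplus_i\g_i$ controls only the decomposable Killing tensors, and symmetric spaces such as $\bo P^2$ carry indecomposable ones. Your last step (separate commutation, then finite dimensionality and induction on $m$) is fine once separate commutation is known.
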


It is important to emphasize that for general Riemannian manifolds, even if one prohibits locally flat factors, the algebra of Killing tensor fields on the product may be strictly bigger than the tensor product of the algebras of Killing tensor fields on the factors. An explicit example is given in~\cite[Section~4]{MN2}, and the structure of the algebra of Killing tensor field on the Riemannian product is described in~\cite[Theorem~1.3]{MN2}.

Irreducible symmetric spaces come in pairs of the duals. We establish the following.

\begin{theorem} \label{t:dual}
  Let $(M,g)$ and $(M',g')$ be the dual connected, simply connected, globally symmetric spaces. Then the complexifications $\sK(M) \otimes \bc$ and $\sK(M') \otimes \bc$ are isomorphic complex, associative, commutative, graded algebras.
\end{theorem}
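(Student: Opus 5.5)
Since the Killing equation \eqref{eq:defK} is of finite type, a Killing tensor field of rank $d$ on a simply connected symmetric space is determined by its $d$-jet at a point $o$. I would first identify $\sK_d(M)$ with the subspace of admissible $d$-jets at $o$, i.e.\ those satisfying the integrability conditions. On a symmetric space these conditions are algebraic in the curvature tensor $R_o$ alone, because $\nabla R=0$. Simple connectedness and analyticity then guarantee that every admissible jet extends to a global Killing tensor field.

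More concretely, I would encode a Killing tensor $K$ via the standard prolongation. Take the values at $o$ of $K$ and its successive symmetrized covariant derivatives, $(K_o,\nabla K_o,\dots,\nabla^d K_o)$, which lie in fixed $\mathfrak{gl}(T_oM)$-modules built from $\Sym(T_oM)$ and tensor powers of $T_oM$. The prolonged system takes the form $\nabla \Phi = A(R)\Phi$, a connection on a finite-rank bundle whose coefficients are universal polynomials in $R$ and $g$. So $\sK_d(M)$ is isomorphic to the space of parallel sections of this connection. For a symmetric space, and by simple connectedness, this equals the subspace $V_d(R)\subset E_o$ annihilated by all the (polynomial in $R$, linear) integrability operators obtained from further prolongations and curvature of the connection. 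Write $\sK(M)\cong \bigoplus_d V_d(R_o)$, where each $V_d$ is cut out by equations with coefficients polynomial in $R_o$. Multiplication corresponds to a universal bilinear map on jets (Leibniz rule), again polynomial in $R_o$.

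Now use duality. For dual spaces, $T_oM=\m$ and $T_{o'}M'=\ir\m$ inside $\g^{\bc}$, and $R'=-R$ under the identification $\m\to \ir\m$, $X\mapsto \ir X$, while $g'$ corresponds to $g$ extended complex-bilinearly. I would complexify: $V_d(R)\otimes\bc$ is the solution space of the complexified equations on $\m^{\bc}$ with curvature $R$. The complex-linear map $\m^{\bc}\to(\ir\m)^{\bc}=\m^{\bc}$, $X\mapsto \ir X$, transforms the metric into $-g$ and the curvature into the correct relation. Each homogeneous piece of the jet then rescales by a power of $\ir$, and this rescaling maps solution spaces to solution spaces and respects the product. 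Alternatively, observe that the equations defining $V_d$ depend on the pair $(g,R)$ homogeneously. Replacing $(g,R)$ by $(-g,R)$ corresponds to $(g,-R)$ up to an overall scaling of the jet components, because $R$ as a $(1,3)$-tensor is scale-invariant under $g\mapsto cg$. A Killing tensor for $g$ is also Killing for $-g$, with the same Levi-Civita connection.

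The main obstacle is bookkeeping: verifying that the prolongation and the integrability equations are genuinely polynomial and homogeneous in the pair $(g,R)$ with a consistent grading, so that the substitution $R\mapsto -R$ can be absorbed by $\ir$-rescaling of jet components. One must also check the resulting map is multiplicative. I would handle this by working directly on $(\m^{\bc},g^{\bc},R^{\bc})$, where the two complexified symmetric spaces share literally the same data up to the isomorphism $X\mapsto \ir X$. The needed identity is simply that this map intertwines $R$ with $R'$ and $g$ with $g'$ (up to sign), so complexified solution spaces and products correspond.
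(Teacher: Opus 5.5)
Your proposal is correct, but it takes a more abstract route than the paper. The paper never uses the prolongation connection or holonomy. By \eqref{eq:Hsym}, the Hamiltonian in geodesic normal coordinates depends on $X$ only through $R_X$, which is quadratic in $X$, so $R_{\ir X}=-R_X=R'_X$ and hence $\cH'(X,P)=\cH(\ir X,P)$. The substitution $X\mapsto \ir X$ multiplies Poisson brackets by $\ir$, so $\cK(X,P)\mapsto\cK(\ir X,P)$ sends complexified local Killing tensors of $M$ to those of $M'$. Being a substitution in power series, it is automatically a graded algebra isomorphism, and Lemma~\ref{l:ltog} globalizes it.

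You instead show that $\sK(M)\otimes\bc$ is a natural ($\mathrm{GL}$-equivariant, polynomial in $R$) function of the jet data at $o$, and then apply $\psi=\ir\cdot\id$ on $\m^{\bc}$. Because $\psi$ acts on every tensor slot, it also changes the sign of the metric. That is why you need the extra remark that $g$ and $-g$ have the same Killing tensors; the paper's substitution rescales only the position variable, so the metric never changes sign.

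Your route buys a conceptual statement: the complexified Killing algebra depends only on the complex Lie triple system $(\m^{\bc},R^{\bc})$, which is manifestly invariant under $R\mapsto -R$. The cost is exactly the bookkeeping you flag: naturality of the prolonged system, of all integrability operators, and of the product on jets. You also need global extension via ``infinitesimal holonomy equals holonomy'' for analytic connections on simply connected manifolds, where the paper uses homogeneity in Lemma~\ref{l:ltog}.

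Two simplifications are available. First, work with covariant Killing tensors: their equation $K_{(i_1\dots i_d,j)}=0$ involves only $\nabla$, so all data are universal polynomials in $R$ as a $(1,3)$-tensor, and $\psi$ sends $R$ to $-R$ with no metric to track. Second, the paper's notion of duality keeps the Euclidean factor, so use the abstract isometry $\iota$ with $\iota^*R'=-R$ rather than realizing $T_{o'}M'$ as $\ir\m\subset\g^{\bc}$.
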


In view of Theorem~\ref{t:prod} and~\ref{t:dual} and the fact that for the flat spaces everything is known, the study of Killing tensor fields on symmetric spaces reduces to the study for compact irreducible ones.

\smallskip

Next we focus on \emph{quadratic} Killing tensor fields. Recall that a Killing tensor field of rank $d$ is uniquely determined by its $d$-th jet at a point~\cite[Theorem~4.3]{Tho}. We say that a Killing tensor field at a point $x \in M$ is \emph{top slot}, if its $(d-1)$-st jet at that point is zero.

The following theorem gives a comprehensive description of quadratic Killing tensor fields on a symmetric space.

\begin{theorem} \label{t:quadratic}
  Let $(M,g)$ be a compact irreducible symmetric space, and let $\sL^2_x$ be the space of the top slot quadratic contravariant Killing tensor fields at the point $x \in M$. The following holds.
  \begin{enumerate}[label=\emph{(\alph*)},ref=\alph*]

    \item \label{it:quadraticspan}
    The space $\sK^2(M)$ of quadratic contravariant Killing tensor fields is spanned by the union $\cup_{x \in M} \sL^2_x$.

    \item \label{it:quadratictop1} 
    Relative to geodesic normal coordinates $X=(x^i)$ at a point $o \in M$ and the corresponding momenta $P=(p_i)$, the function $\cK$ on $T^*M$ corresponding to a Killing tensor field $K \in \sL^2_o$ is given by $\cK(X,P) = K_2(X,X,P,P)$, where $K_2$ is a constant tensor of type $(0,4)$ on $T_oM$ which is symmetric by its first and by its second pairs of arguments.

    \item \label{it:quadraticeqs}
    Conversely, let $K$ be a quadratic contravariant tensor field on $M$ such that the Taylor expansion at $o \in M$ of the function $\cK$ relative to geodesic normal coordinates is given by $\cK(X,P)=K_2(X,X,P,P)$, where $K_2$ is as in~\eqref{it:quadratictop1}.

    Then $K$ is Killing if and only if it satisfies the following system of linear equations: for all $X, P \in T_oM$,
    \begin{gather}
        K_2(X,X,X,P)=0,  \label{eq:quadraticsigma}\\
        K_2(X,X,P,R(X,P)P)=K_2(P,P,X,R(P,X)X), \label{eq:quadratic21}\\
        K_2(X,X,R(X,P)P,R(X,P)P)=K_2(P,P,R(P,X)X,R(P,X)X), \label{eq:quadratic22},
    \end{gather}
    where $R$ is the curvature tensor of $(M,g)$ at the point $o$.
  \end{enumerate}
\end{theorem}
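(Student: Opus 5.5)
Throughout, I use the standard facts about Killing tensors on a locally symmetric space. A quadratic Killing tensor $K$ is determined by its $2$-jet at $o$. In geodesic normal coordinates at $o$ I write the Taylor expansion of $\cK$ as $\cK(X,P)=K_0(P,P)+K_1(X,P,P)+K_2(X,X,P,P)+\dots$. The Killing condition is $\{\cH,\cK\}=0$, where the metric $g_{ij}(X)$ has the well-known expansion in normal coordinates with coefficients built only from $R$ at $o$ (because $\nabla R=0$). The plan is to prove (b) and (c) first, by a direct jet computation, and then deduce (a) from (c) by a representation-theoretic counting argument.

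\emph{Parts (b) and (c).} For $K\in\sL^2_o$ we have $K_0=K_1=0$. I would expand $\{\cH,\cK\}=0$ order by order in $X$.
\begin{enumerate}
\item At the lowest order, the flat part $\sum p_i\partial_{x^i}$ of $\{\cH,\cdot\}$ applied to $K_2$ gives $K_2(X,P,P,P)$-type terms. Since the jet is determined at order two, symmetry of the relevant Killing-equation prolongation forces $K_2$ to have no cubic part beyond what is fixed. Here I would use the finite-type property of \cite[Theorem~4.3]{Tho}: the $2$-jet of a top slot field is a tensor symmetric in $(X,X)$ and in $(P,P)$, with the Young symmetrisation condition \eqref{eq:quadraticsigma}.
\item Next I would show that, in normal coordinates on a symmetric space, the full $\cK$ equals its quadratic term exactly. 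My approach is to use the geodesic symmetry $s_o$: it maps $\cK$ to $\cK(-X,-P)$. The Jacobi-field description of the flow also expresses everything through $R$. Concretely, I would integrate $\cK$ along geodesics through $o$ using the explicit parallel-transport and Jacobi-operator formulas $\cosh/\sinh$ of $R(\cdot,P)P$ in normal coordinates. The claim to establish is that the higher Taylor terms are obtained from $K_2$ by these formulas; the natural normalisation then yields $\cK=K_2(X,X,P,P)$.
\item Conversely, given a tensor field $K$ with $\cK(X,P)=K_2(X,X,P,P)$, I would compute $\{\cH,\cK\}$ using the symmetric-space expression for $g^{ij}(X)$. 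That expression is the function $\frac{\sqrt{R_X}}{\sin\sqrt{R_X}}$ squared-type operator of the Jacobi operator $R_X=R(\cdot,X)X$. Expanding and collecting the terms homogeneous of degree $3$, $4$, $5$ in $(X,P)$ gives \eqref{eq:quadraticsigma}, \eqref{eq:quadratic21} and \eqref{eq:quadratic22}. Invariance under $\nabla R=0$ should make all higher orders consequences of these; this is the point I expect to be most delicate.
\end{enumerate}

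\emph{Part (a).} The span $\sum_x\sL^2_x$ is $G$-invariant, because isometries carry $\sL^2_x$ to $\sL^2_{gx}$. I would argue by contradiction via the jet filtration at $o$. Take $K\in\sK^2(M)$. Using $G$-invariance and compactness, it suffices to show that the quotient $\sK^2/\Span(\cup\sL^2_x)$ is zero. Any class in this quotient has a representative whose jet at $o$ is nonzero only in orders $\le1$. The $0$- and $1$-jets are realised by decomposable tensors (products of Killing fields), and I would show these lie in the span by differentiating along $x$: for a Killing field $\xi$ vanishing at $o$, the derivative of the family $\sL^2_{\exp(t\xi)o}$ produces them. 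The hard part is exactly this surjectivity onto low jets. It amounts to showing that, as $G$-modules, $\sK^2$ has no irreducible summand orthogonal to all top slot tensors. To handle it, I would use Frobenius reciprocity: every irreducible summand $V$ of $\sK^2$ has a $K$-equivariant evaluation into jets at $o$. If $V$ misses the top slot, its $2$-jet map vanishes identically at every point, and finite type then forces $V=0$.
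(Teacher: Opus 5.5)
Your outline has genuine gaps in all three parts. The places you flag as delicate are exactly where the paper's key ideas are.

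For (b), the reflection $r_o$ only removes the odd Taylor terms: the odd part of a top slot field has zero $2$-jet, hence vanishes. Integrating along geodesics through $o$ only gives $\cK(sX,X)=\cK(0,X)=0$. Nothing you write forces $K_4,K_6,\dots$ to vanish. The paper uses the order-by-order recursion \eqref{eq:bracketHKP}. Suppose $K_{2+2q}$, $q\ge1$, is the first nonzero higher term, and put $\mathcal{A}=\{\tfrac12\|P\|^2,\cdot\}$. Then $\mathcal{A}K_{2+2q}=-c_q\{\langle R_X^qP,P\rangle,K_2\}$. Since $\mathcal{A}$ is a derivation of the Poisson bracket and $\mathcal{A}K_2=0$, applying $\mathcal{A}^{2q-1}$ gives $\mathcal{A}^{2q}K_{2+2q}=-c_q\{\mathcal{A}^{2q-1}\langle R_X^qP,P\rangle,K_2\}$. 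Here $\mathcal{A}^{2q-1}\langle R_X^qP,P\rangle$ is a multiple of $\langle X,R_P^qP\rangle=0$, and Lemma~\ref{l:young} then yields $K_{2+2q}=0$.

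For (c), the Killing condition for $\cK=K_2(X,X,P,P)$ is one identity for each $N\ge0$, involving $R_X^N$. You give no reason why these reduce to three. The condition $\nabla R=0$ is already fully built into the formula for $\cH$ and does not by itself truncate this hierarchy. The paper sums the $N$-th identity with weight $t^N$ and substitutes $P=(\id-tR_X)Q$. This turns the whole family into a polynomial identity of degree $2$ in $t$. Its three coefficients give \eqref{eq:quadraticsigma}, \eqref{eq:quadratic21}, and, after polarising $X\mapsto X+\lambda P$, \eqref{eq:quadratic22}.

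For (a), your concluding step is incorrect. An irreducible summand $V\subset\sK^2(M)$ containing no field top slot at any point is one on which the $1$-jet map at every point is injective. It is not one whose $2$-jet map vanishes, so finite type gives no contradiction.

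Your claim that all $0$- and $1$-jets are realised by decomposable fields is also false. Take $\bo P^2$ and an indecomposable $K_A$ that is odd at $E$. If it had the $1$-jet of a decomposable field, it would have the $1$-jet of that field's odd part $D$, which is still decomposable. Then $K_A-D$ would be odd with zero $2$-jet, so $K_A=D$, a contradiction. Separately, a Killing field $\xi$ vanishing at $o$ fixes $o$. So $\sL^2_{\exp(t\xi)o}=\sL^2_o$, and differentiating the family produces nothing new.

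The paper's argument for (a) has three steps you are missing.
\begin{enumerate}
\item Let $G'$ be the compact group generated by the geodesic reflections. Every irreducible $G'$-submodule contains a nonzero field even at $o$, namely $K+r_o^*K$ for any of its elements with $K(o)\ne0$. Since $\sW^2(M)$ is $G'$-invariant, it suffices to treat fields even at $o$.
\item Such a field, minus a quadratic form in the odd Killing fields $V^1_v$, lies in $\sL^2_o$. So it remains to show $V^1_u\odot V^1_v\in\sW^2(M)$.
\item This follows from the identity $\{\{V^0_A\odot V^0_A,V^1_v\},V^1_v\}=2V^1_{[A,v]}\odot V^1_{[A,v]}+2V^0_A\odot V^0_{[[A,v],v]}$, where the last term is top slot. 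Two further facts complete it: the map $(A,v)\mapsto[A,v]$ has image with nonempty interior, and a root-space argument shows that the elements $[[u,v],w]$, $v,w\in\m$, span $\m$ for regular $u$.
\end{enumerate}
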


\begin{remark} \label{rem:topslothigh}
  The picture is even more intriguing: it turns out that for the top slot Killing tensor fields of \emph{arbitrary} rank, in their Taylor expansion, not only the terms of degree fewer than $d$ are absent (which is the definition), but also the terms of degree greater than $d$. Hence top slot Killing tensor fields are defined by a single constant tensor of type $(d,d)$ on $\m$, similar to Theorem~\ref{t:quadratic}\eqref{it:quadratictop1} for the quadratic ones. What is more, necessary and sufficient conditions for such a constant tensor to define a top slot Killing field given in assertion~Theorem~\ref{t:quadratic}\eqref{it:quadraticeqs} remain valid for an arbitrary rank as well, with obvious modifications. We prove these facts in Proposition~\ref{p:top}.
  However, we do not know if the claim of~Theorem~\ref{t:quadratic}\eqref{it:quadraticspan} remains true for higher rank, that is, whether the space of Killing tensor fields of rank $d > 2$ is spanned by the top slot ones.
    
  In the language of PDE, this means that there are no further differential or algebraic obstructions for the local existence of a Killing tensor field (of rank $d \le 2$) and of a top slot Killing tensor field (of an arbitrary rank) then those given in Theorem~\ref{t:quadratic}\eqref{it:quadraticeqs} and in Proposition~\ref{p:top}\eqref{it:topint}, respectively.
\end{remark}


We apply Theorem~\ref{t:quadratic} to complete the description of quadratic Killing tensor fields on rank one symmetric spaces. By Theorem~\ref{t:dual}, we can restrict ourselves to the compact ones, and moreover, all the Killing tensor fields on the sphere and on $\bc P^n$ are known -- see above. On the other hand, there is a family of indecomposable quadratic tensor fields on the Cayley projective plane $\bo P^2$, and for every $n \ge 3$, a family of indecomposable quadratic tensor fields on the quaternionic projective space $\bh P^n$ constructed in~\cite{MN1}; we give their description in Section~\ref{s:hpnop2}. We prove that there no others:

\begin{theorem} \label{t:hpnop2}
{\ }
  \begin{enumerate}[label=\emph{(\alph*)},ref=\alph*]

    \item \label{it:hpnop2hp2}
    Any quadratic Killing tensor field on $\bh P^2$ is decomposable.

    \item \label{it:hpnop2hpn}
    The space of quadratic Killing tensor fields on $\bh P^m, \, m \ge 3$, is spanned by the decomposable ones and by those constructed in~\cite[Section~2]{MN1}.

    \item \label{it:hpnop2op2}
    The space of quadratic Killing tensor fields on $\bo P^2$ is spanned by the decomposable ones and those constructed in~\cite[Section~3]{MN1}.
  \end{enumerate}
\end{theorem}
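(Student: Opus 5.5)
By Theorem~\ref{t:quadratic}\eqref{it:quadraticspan}, $\sK^2(M)$ is spanned by the union of the spaces $\sL^2_x$. Since $M$ is homogeneous and the isometry group preserves both the decomposable fields and the families of~\cite{MN1}, it is enough to work at one point $o$. There we need to show that every top slot field in $\sL^2_o$ lies in the span of decomposable fields and (for $\bh P^m$, $m\ge 3$, and $\bo P^2$) of the fields from~\cite{MN1}. So the plan is to solve the linear system \eqref{eq:quadraticsigma}--\eqref{eq:quadratic22} explicitly for the constant tensor $K_2$ on $\m=T_oM$. We then compare the solution space with the subspace of $\sL^2_o$ coming from known fields.

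\textbf{Setting up the algebra.} For $\bh P^m$ we identify $\m=\bh^m$ with the curvature
\[
R(X,Y)Z=\<Y,Z\>X-\<X,Z\>Y+\textstyle\sum_{a}\bigl(\<J_aY,Z\>J_aX-\<J_aX,Z\>J_aY+2\<X,J_aY\>J_aZ\bigr).
\]
For $\bo P^2$ we use the standard octonionic model of $\m=\bo^2$ with its $\Spin(9)$-invariant curvature. Equation \eqref{eq:quadraticsigma} together with the pair symmetries says that $K_2$ is an algebraic curvature-type tensor: symmetric in each pair, with vanishing total symmetrisation over three arguments. The space of such tensors is $\Sym^2\m\otimes\Sym^2\m$ modulo $\Sym^4\m$ and the "mixed" part. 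We decompose it under the isotropy group $\Sp(m)\Sp(1)$ (respectively $\Spin(9)$) into irreducible summands. Equations \eqref{eq:quadratic21} and \eqref{eq:quadratic22} are isotropy-equivariant, so by Schur's lemma the solution space is a sum of isotypic pieces. We can therefore test each irreducible component separately, plugging in highest weight vectors or suitably chosen $X,P$, for example $X,P$ spanning a quaternionic line, totally real pairs, and mixed positions.

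\textbf{Comparing with known fields.} Next we compute the top slot parts of the known fields. These are the $2$-jets at $o$ of symmetric products of Killing vector fields vanishing at $o$, corrected by lower-order terms, that is, elements of $\Sym^2\h$ mapped into $\Sym^2\m\otimes\Sym^2\m$ via $\ad$. We also compute the top slot parts of the fields of~\cite{MN1}. We then verify that these span the whole solution space. For $\bh P^2$ the extra isotypic component responsible for the~\cite{MN1} family must be shown either to be absent or to fail \eqref{eq:quadratic22}, which yields part~\eqref{it:hpnop2hp2}.

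\textbf{Main obstacle.} The hardest step is \eqref{eq:quadratic22}, which is quartic in curvature terms. The difficulty is greatest for $\bo P^2$, where octonionic non-associativity makes explicit evaluation painful. I would first restrict $X,P$ to a maximal flat together with a root-space decomposition: since the rank is one, $X$ can be rotated to a fixed unit vector, and $P$ decomposes into eigenspaces of $R(\cdot,X)X$ with eigenvalues $0,1,4$. This reduces each equation to polynomial identities in the components of $P$ with respect to the isotropy subgroup fixing $X$, namely $\Sp(m-1)\Sp(1)$ or $\Spin(7)$. A dimension count of the surviving isotypic pieces, checked against the dimension of decomposables plus the~\cite{MN1} family, should then close the argument.
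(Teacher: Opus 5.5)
\textbf{Verdict: genuine gap.} Your reduction is sound: by Theorem~\ref{t:quadratic}\eqref{it:quadraticspan} and $G$-invariance of the decomposable fields and of the families of \cite{MN1}, it suffices to show that $\sL^2_o$ lies in their span. For $\bo P^2$ your plan is essentially the paper's, which is a computer-aided count. The tensors satisfying \eqref{eq:quadraticsigma} form a $5440$-dimensional space, the solutions of \eqref{eq:quadratic21} form a $676$-dimensional one, and $676=666+10$ matches $\Sym^2(\so(9))$ plus the ten top-slot corrections $K_A'$ of the even fields $K_A$.

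The step that carries the whole theorem, the \emph{upper bound} on the solution space, is only announced in your proposal (``we then verify'', ``should then close the argument''). For part~(b) it cannot be a finite computation, because it must hold for every $m\ge 3$. Decomposing under $\Sp(m)\Sp(1)$ does not supply it by itself: the relevant components occur with multiplicity, so Schur's lemma leaves linear systems on multiplicity spaces to be solved uniformly in $m$, and you give no mechanism for that.

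The missing idea is the paper's structural reduction with induction on $m$:
\begin{enumerate}
\item Using the explicit curvature of $\bh P^m$ and \eqref{eq:quadraticsigma}, equation \eqref{eq:quadratic21} becomes $\sum_\a (J_\a.K_2)(X,X,P,P)\<J_\a X,P\>=0$.
\item The syzygy statement Lemma~\ref{l:hpmprep}\eqref{it:hpmprepp1p2p3}, which rests on primality of the ideal generated by two of the quadrics $\<J_\a X,P\>$, expresses $J_\a.K_2$ through matrices $N_\a\in\so(4m)$.
\item After subtracting an explicit term $\sum c_{\b\gamma}\<J_\b X,P\>\<J_\gamma X,P\>$, the commutation relations of the operators $J_\a.$ and trace identities force $N_\a\in\spg(m)$.
\item After subtracting $\sum_\a\<N_\a X,P\>\<J_\a X,P\>$, what remains is an $\Sp(1)$-invariant $K_2$ on $\br^{4m}$ with $K_2(X,X,X,P)=0$. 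By Lemma~\ref{l:hpmprep}\eqref{it:hpmprepFT} this is exactly the data of a quadratic Killing tensor on $\bh P^{m-1}$ via the Hopf map $S^{4m-1}\to\bh P^{m-1}$.
\end{enumerate}
The induction hypothesis, together with an explicit lifting of $\sT_{m-1}$ and of the two subtracted terms to $\sT_m$, then gives (b) for all $m$ at once.

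Two further points in your plan are misdirected.

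First, \eqref{eq:quadratic22} is not an obstacle at all. By Corollary~\ref{c:rk1top} it follows from \eqref{eq:quadraticsigma} and \eqref{eq:quadratic21} on any rank-one space. The reason is that $\cH$ in \eqref{eq:Hrk1} is built from $\|X\|^2$, $\|P\|^2$, $\<X,P\>$ and $R(P,X,X,P)$, and the first three Poisson-commute with $K_2(X,X,P,P)$ once \eqref{eq:quadraticsigma} holds.

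Second, for part~(a) the fields of \cite{MN1} on $\bh P^2$ cannot ``fail \eqref{eq:quadratic22}'': Lemma~\ref{l:hpmprep}\eqref{it:hpmprepFT} shows that $F_T$ is Killing for every $T\in\sT_2$. What has to be proved is that they are decomposable, i.e.\ $\sT_2^2\subset\sT_2^1+\Ker F$. The paper does this by an explicit identity:
\begin{enumerate}
\item Diagonalise $S=\diag(a_1I_4,a_2I_4,a_3I_4)$ by $\Sp(3)$.
\item Choose $\mu_i$ with $\mu_i+\mu_j=a_ia_j$ for $i\ne j$; this is solvable because there are exactly three quaternionic blocks.
\item Use $\sum_\a\<\rL(\ir_\a)z,z'\>^2=\sum_\a\<\rR(\ir_\a)z,z'\>^2$ on $\bh$.
\end{enumerate}
This writes $T^2_{S,S}$ as a combination of $T^2_{I_{12},S'}$, which lies in $\Ker F$ by Lemma~\ref{l:hpmprep}\eqref{it:hpmprepkerF}, and of tensors $T^1_{A,A}$ with $A\in\spg(3)$. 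An isotypic-component count could at best detect this numerically for $m=2$; it would not explain it.
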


We note that Theorem~\ref{t:hpnop2}(\ref{it:hpnop2op2}) can also be derived indirectly from \cite[\S~4.7]{EL}.

\section{Preliminaries}
\label{s:prel}

\subsection{Killing tensor fields}
\label{ss:Kt}

Let $(M,g)$ be a Riemannian manifold of dimension $n$. Let $(x^1, \dots, x^n)$ be local coordinates defined on a connected neighbourhood $U \subset M$, and let $(p_1, \dots, p_n)$ be the corresponding momenta, the functions on $T^*M$ defined by $p_i(x,\omega) = \omega(\partial/\partial x_i)$ for $x \in M$ and $\omega \in T^*_xM$.

The \emph{Poisson bracket} of differentiable functions $f$ and $h$ on $T^*M$, is defined by
\begin{equation}\label{eq:Poissonbr}
  \{f, h\} = \sum_{j=1}^{n} \Big(\frac{\pd f}{\pd x^i} \frac{\pd h}{\pd p_i} - \frac{\pd h}{\pd x^i} \frac{\pd f}{\pd p_i}\Big).
\end{equation}
The \emph{Hamiltonian} is the function on $T^*M$ defined by $\cH = \frac12 g^{ij} p_i p_j$. 

The curvature tensor is defined by $R(X,Y,Z,V)=\<R(X,Y)Z,V\>=\<(\nabla_X \nabla_Y - \nabla_Y \nabla_X - \nabla_{\nabla_XY - \nabla_YX})Z, V\>$, where $\nabla$ is the Levi-Civita connection and $\ip$ is the inner product defined by $g$. For $x \in M$ and $X \in T_xM$, the \emph{Jacobi operator} $R_X$ is a symmetric operator on $T_xM$ is defined by $R_X Y =R(Y,X)X$.

For $d \ge 1$, let $\sK^d(M)$ be the space of contravariant Killing tensor fields on $(M,g)$ of rank $d$; we formally define $\sK^0(M) = \br$. The following facts are well known (most of them can be found e.g. in~\cite{Tho}).
\begin{enumerate}[label={Fact \arabic*.},ref=\arabic*]
  \item \label{it:ffindim}
  For any $d \ge 0$, an element of $\sK^d(M)$ is uniquely determined by its $d$-th germ at any point of $M$. In particular, the space of $\sK^d(M)$ is finite dimensional. 

  \item \label{it:fgerm}
  Every space $\sK^d(M)$ is a $G$-module and a $\g$-module, where $G$ is the full isometry group of $M$ and $\g$ its Lie algebra.

  \item \label{it:fsym}
  If $K^1 \in \sK^{d_1}(M)$ and $K^2 \in \sK^{d_2}(M)$, then $K^1 \odot K^2 \in \sK^{d_1+d_2}(M)$, where $K^1 \odot K^2$ is the full symmetrisation of the tensor product $K^1 \otimes K^2$, so that the space $\sK(M)$ is a graded associative, commutative algebra relative to $\odot$.

  \item \label{it:fpoisson}
  If $K^1 \in \sK^{d_1}(M)$ and $K^2 \in \sK^{d_2}(M)$, then $\{K^1,K^2\} \in \sK^{d_1+d_2-1}(M)$, where $\{\cdot,\cdot\}$ is the Schouten bracket (defined by the extension of the Lie bracket to the tensor algebra; the function on $T^*M$ corresponding to $\{K_1,K_2\}$ is the Poisson bracket of the functions $\cK_1$ and $\cK_2$ corresponding to $K_1$ and $K_2$, respectively).

  \item \label{it:fanaly}
  If $(M,g)$ is analytic, then any element of $\sK(M)$ also is.

  \item \label{it:ftg}
  The restriction of a (covariant) Killing tensor field to a totally geodesic submanifold is Killing.
\end{enumerate}

For $d \ge 1$, denote $\sS^d(M)$ the linear span of the $d$-th symmetric products of the Killing vector fields, the space of decomposable elements of $\sK^d(M)$. Clearly, $\sS^d(M)$ is finite-dimensional $G$-module.

Clearly, the algebra of contravariant tensor fields equipped with the symmetric tensor product is isomorphic to the algebra of functions on $T^*M$ polynomial in momenta, as the graded, associative, commutative algebra. This isomorphism extends to the Lie algebra structures on them defined by the Schouten and the Poisson brackets, respectively. We will use the following notational convention: for a contravariant tensor field, we use a capital Roman letter (e.g., $K$), and for the corresponding function on $T^*M$, the same letter in calligraphic script (respectively, $\cK$).

In the following lemma we show that on a simply connected homogeneous space, a locally defined Killing tensor field can be extended to the whole space. As one can immediately see from an example of the flat torus, the condition of simply-connectedness is essential. Moreover, on a $C^\infty$ Riemannian space, one can have a local Killing vector field which does not extend globally. For the purposes of our study, the homogeneity assumption is sufficient, although one may ask if it can be relaxed to analyticity.

\begin{lemma} \label{l:ltog}
  Let $M$ be a simply connected homogeneous space and let $U \subset M$ be a connected, open subset. If $K' \in \sK^d(U)$ for some $d \ge 1$, then there exists a unique $K \in \sK^d(M)$ such that the restriction of $K$ to $U$ coincides with $K'$.
\end{lemma}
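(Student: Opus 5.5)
The plan is to use the prolongation of the Killing equation to turn Killing tensors into parallel sections of a flat connection, and then use simple connectedness to get a global section.

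\textbf{Prolongation.} By Fact~\ref{it:ffindim}, the Killing equation~\eqref{eq:defK} closes after $d$ prolongations. Concretely, there is a vector bundle $E \to M$ whose fibre over $x$ is the space of admissible $d$-jets, i.e.\ jets of symmetric tensors at $x$ satisfying the prolonged equations up to order $d$. On $E$ there is a linear connection $D$ such that the map $K \mapsto j^d K$ identifies Killing tensors on an open set $V$ with $D$-parallel sections of $E|_V$. The connection $D$ is built from the curvature and its derivatives, hence is natural: every isometry $\phi$ of $M$ lifts to a bundle map of $E$ preserving $D$.

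\textbf{Analytic continuation along curves.} Uniqueness is immediate from Fact~\ref{it:ffindim}, since two extensions agree to order $d$ at a point of $U$. For existence, fix $x_0 \in U$ and continue the jet $j^d_{x_0}K'$ along curves, not by parallel transport of $D$ (which need not be flat), but using homogeneity, as follows. Call $(V,K_V)$ an \emph{element} if $V$ is a connected open set and $K_V \in \sK^d(V)$. Given a point $y$ and a point $x \in V$, pick an isometry $\phi$ with $\phi(x)=y$. Then $\phi_*K_V$ is Killing on $\phi(V)$, a neighbourhood of $y$ of the same "size". This uniform size is the role of homogeneity: it gives a uniform radius $r>0$ such that every element defined near a point extends to the ball of radius $r$ about it.

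To establish this, I first show the following. There is $r>0$ such that for every $x$, every Killing tensor defined on some neighbourhood of $x$ extends to the geodesic ball $B(x,r)$. The natural approach is to consider the family of jets at $x$ that arise from Killing tensors on arbitrarily small neighbourhoods; this family is a finite-dimensional space $J_x$. Since $J_x$ is a decreasing limit of finite-dimensional spaces, it stabilises at some radius $r_x$. By homogeneity, $r_x = r$ is independent of $x$, because isometries carry the whole picture at $x$ to that at $\phi(x)$.

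The key subtlety is showing that local solutions with jets in $J_x$ actually exist on a ball of fixed radius. Stabilisation gives this: choose $r$ with $J_x = \{\text{jets of Killing tensors on } B(x,r)\}$. Any germ at $x$ defines a jet in $J_x$, hence there is $\tilde K$ on $B(x,r)$ with the same $d$-jet. By Fact~\ref{it:ffindim} applied on the connected set $V \cap B(x,r)$ (taking the component containing $x$), $\tilde K$ agrees with the germ. This is the main step; everything else is standard monodromy.

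\textbf{Monodromy.} With the uniform radius $r$ in hand, continuation of $K'$ along any path from $x_0$ is possible by covering the path with balls of radius $r/2$. By Fact~\ref{it:ffindim}, the continuation is independent of the choices made along a given path. Homotopic paths give the same result by the usual monodromy argument: subdivide the homotopy square into small cells each lying in one ball where the solution is single valued. Since $M$ is simply connected, this produces a global Killing tensor $K$. It restricts to $K'$ on $U$ because $U$ is connected and $K$ agrees with $K'$ near $x_0$, so again Fact~\ref{it:ffindim} applies.
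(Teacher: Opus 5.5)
Your proof is correct, but it takes a different route from the paper. The paper uses the prolongation directly: Killing tensors of rank $d$ are the parallel sections of a connection $\nabla^d$ on a jet bundle. Homogeneity makes the dimension $N$ of the local solution space the same at every point. The values of the local parallel sections therefore span an $N$-dimensional $\nabla^d$-invariant subbundle $\Delta$ on which $\nabla^d$ is flat, and trivial holonomy on the simply connected $M$ extends any local parallel section.

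You never use a connection; your Prolongation paragraph can be dropped, since only Fact~\ref{it:ffindim} and the finite dimensionality of jet spaces are used. Instead you show that local Killing tensors extend over balls of a uniform radius $r$, and then run continuation along paths and the monodromy theorem. In effect, you prove that the sheaf of local Killing tensors is locally constant, and use that such a sheaf on a simply connected space is constant.

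Both arguments rest on the same point. For the paper's $\Delta$ to be a well-defined subbundle spanned by local parallel sections, the Killing tensors on a small ball must already realise every germ at each of its points. That is exactly your stabilisation of jet spaces combined with homogeneity, which the paper leaves implicit in ``we can assume that $U$ is a small geodesic ball''. The paper's version is shorter given the prolongation machinery; yours is more elementary and makes this uniformity explicit.

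One wording correction: the spaces of $d$-jets at $x$ of Killing tensors on $B(x,\rho)$ \emph{increase} as $\rho$ decreases. They stabilise because they all lie in the fixed finite-dimensional space of $d$-jets at $x$. So ``decreasing limit'' should read ``increasing union as the radius shrinks''.
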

\begin{proof} 
  As a homogeneous space is analytic, any Killing tensor field on an open, connected subset of it is also analytic. Therefore without loss of generality, we can assume that $U$ is a small geodesic ball.

  The linear space of Killing tensor fields of rank $d$ on a Riemannian manifold is linearly isomorphic to the space of parallel sections of a certain linear connection $\nabla^d$ on the $d+1$-st jet bundle $J^{d+1}$ on the manifold (this fact is implicitly contained in \cite[Section~4]{WT}, and explicitly, in \cite[Theorem~8]{KM}). Let $N$ be the dimension of that subspace for Killing tensors of rank $d$ over $U$. As $M$ is homogeneous, this dimension is the same for all points of $M$, and moreover, by the induced action of the isometry group of $M$ on $J^{d+1}$, we obtain an $N$-dimensional sub-bundle $\Delta$ of $J^{d+1}(M)$ spanned by locally parallel (and locally defined) sections of $\nabla^d$. But $\Delta$ is $\nabla^d$-invariant, and the restriction of $\nabla^d$ to $\Delta$ is flat. As $M$ is simply connected, the holonomy group of this restriction is trivial, which implies that any local parallel section extends to a global one.
\end{proof}

\subsection{Symmetric spaces} 
\label{ss:symsp}

General references for theory of symmetric spaces are~\cite{H,WJ}.

Let $(M,g)$ be a simply connected Riemannian globally symmetric space of dimension $n \ge 3$, with the presentation $M=G/H$, where $G$ is the connected component of the full isometry group of $M$, $H$ is the isotropy subgroup at the point $o \in M$, which is the coset space $eH$, where $e \in G$ is the identity element. We denote $\g$ and $\h$ the Lie algebras of $G$ and $H$ respectively, and we identify $T_oM$ with the $\h$-module $\m \subset \g$ orthogonal to $\h$ relative to the Killing form of $\g$. 

We have $R(X,Y)Z=-[[X,Y],Z]$ for $X,Y,Z \in \m$. In particular, the Jacobi operator is given by $R_X Y = -\ad_X^2 Y$ for $X,Y \in \m$, where $\ad_T T' = [T,T']$ for $T,T' \in \g$.

\section{Killing tensor fields on symmetric spaces in normal coordinates.}
\label{s:kts}

Let $(M^n,g)$ be a Riemannian manifold. We introduce geodesic normal coordinates $(x^1, \dots, x^n)$ centered at a point $o \in M$ and defined on a neighbourhood $U$ of $o$, and denote $(p_1, \dots, p_n)$ the corresponding coordinates on the fibers of cotangent bundle to $M$. Under the pullback of the map $\exp_o:T_oM \to M$, this gives us the coordinates on the fibers of $T^*T_oM$. To an element of this bundle we associate a pair of vector $(X, P) \in T_oM \oplus T_oM$, where $X= (x^1, \dots, x^n)$ and $P=(p^1, \dots, p^n)$, where $p^i = p_i$, so that $P$ is the vector dual to the corresponding element of $T_X^*T_oM$ relative to the inner product $\ip$ on $T_oM$.

Local geometry of a symmetric space is fully encoded in its curvature tensor $R$ at a single point. We take $o \in M$ to be the projection of the identity in the presentation $M=G/H$ as in Subsection~\ref{ss:symsp}, and denote $\m = T_oM$. For $X \in \m$, we denote $R_X$ the Jacobi operator on $\m$ acting by $R_XY = R(Y,X)X$.

\begin{proposition} \label{p:Jac} 
  Let $(M,g)$ be a locally symmetric space. In the above notation, the following holds.
  \begin{enumerate}[label=\emph{(\alph*)},ref=\alph*]
    \item \label{it:JacgH}
    The metric tensor and the Hamiltonian are given, respectively, by
    \begin{align}
     \begin{split} \label{eq:symnormal}
        g(X) & = (t^{-1}\sin^2 \sqrt{t})|_{t=R_X} = (u^{-2}\sinh^2 u)|_{u=\ad_X} \\
            & = \sum_{m=0}^{\infty} \frac{2^{2m+1}(-1)^m}{(2m+2)!}R_X^m=\id - \frac13 R_X + \frac{2}{45}R_X^2 + O(\|X\|^6).
     \end{split}
     \\
     \begin{split} \label{eq:Hsym}
     \cH(X,P) & =\frac12 g^{ij}(x)p_ip_j = \Big\< \frac{t}{2\sin^2(\sqrt{t})}\Big|_{t=R_X} P, P\Big\>  = \sum_{m=0}^{\infty} c_m \<R_X^m P,P\> \\
       & = \frac12 \|P\|^2 + \frac16 R(P,X,X,P)+\frac{1}{30}\<R_X^2 P,P\>+ O(\|X\|^6),
     \end{split}
    \end{align}
    for $X, P \in \m$, where for $m \ge 0$,
    \begin{equation}\label{eq:cm}
      c_m = \frac{(-1)^{m+1}(2m-1)2^{2m-1}B_{2m}}{(2m)!},
    \end{equation}
    where $B_{2m}$ is the $(2m)$-th Bernoulli number.

    \item \label{it:JacV}
    For the space $\sK^{1}$ of Killing vector fields we have a decomposition $\sK^{1} = \sK^{0,1} \oplus \sK^{1,1}$, where the subspaces $\sK^{0,1}$ and $\sK^{1,1}$ are spanned respectively by the vector fields $V^0_A$ and $V^1_v$ defined by
    \begin{align} \label{eq:veven}
        \<V^0_A(X),P\> & = \<[A,X],P\>, \qquad A \in \h, \qquad \text{and} \\
     \begin{split} \label{eq:vodd}
     \<V^1_v(X),P\> & =\<v, (\sqrt{t} \cot\sqrt{t})|_{t=R_X} P\> = \<v, (u \coth u)|_{u=\ad_X} P\>  \\
       & = \<v, P-\frac13 R_X P - \frac1{45} R_X^2 P\> + O(\|X\|^6), \qquad v \in \m.
     \end{split}
    \end{align}
  \end{enumerate}
\end{proposition}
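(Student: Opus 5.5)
Part (a) rests on the Jacobi field description of $d\exp_o$. Part (b) then follows by computing Killing vector fields as Jacobi fields along radial geodesics.

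For (a), let $\gamma(s)=\exp_o(sX)$. For $Y\in\m$, the differential $(d\exp_o)_X Y$ is $J(1)$, where $J$ is the Jacobi field along $\gamma$ with $J(0)=0$ and $J'(0)=Y$. Since $M$ is locally symmetric, $\nabla R=0$. Hence, in a parallel frame along $\gamma$ (parallel transport is used to identify tangent spaces with $\m$), the Jacobi equation $J''+R(J,\dot\gamma)\dot\gamma=0$ has constant coefficients: $J''+R_XJ=0$. Its solution is
\[
J(1)=\frac{\sin\sqrt{t}}{\sqrt{t}}\Big|_{t=R_X}Y .
\]
Parallel transport is an isometry, and the operator is symmetric. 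So the pullback metric is $g(X)=(t^{-1}\sin^2\sqrt t)|_{t=R_X}$. For a symmetric space we have $R_X=-\ad_X^2$, so $t=-u^2$ with $u=\ad_X$, and $t^{-1}\sin^2\sqrt t=u^{-2}\sinh^2u$. The power series is immediate from $\sin^2 y=(1-\cos 2y)/2$, which gives $t^{-1}\sin^2\sqrt t=\sum_m (-1)^m2^{2m+1}t^m/(2m+2)!$.

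Inverting gives the Hamiltonian: $\cH=\frac12\<g(X)^{-1}P,P\>$ with $g^{-1}=t/\sin^2\sqrt t$. We expand $t/\sin^2\sqrt t$ using the Laurent series of $1/\sin^2y=-\frac{d}{dy}\cot y$ together with the Bernoulli expansion $y\cot y=\sum_m (-1)^m2^{2m}B_{2m}y^{2m}/(2m)!$. Differentiating term by term yields the coefficients $c_m$. The first few terms can be checked directly: $\frac12,\frac16,\frac1{30}$.

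For (b), the isometry algebra is $\g=\h\oplus\m$, and each element gives a Killing vector field. Elements $A\in\h$ fix $o$. Their flows are linear isometries of $T_oM$ and commute with $\exp_o$, so in normal coordinates the field is linear: $V^0_A(X)=[A,X]$. For $v\in\m$ we compute the Killing field $V$ with $V(o)=v$ and $\nabla V(o)=0$ along each radial geodesic. A Killing field restricted to a geodesic is a Jacobi field, here with $J(0)=v$ and $J'(0)=0$. So in the parallel frame it equals $\cos\sqrt t|_{t=R_X}v$.

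To express this in normal coordinates we apply $(d\exp_o)_X^{-1}$, i.e. multiply by $\big(\sqrt t/\sin\sqrt t\big)$, giving the coordinate components $\sqrt t\cot\sqrt t|_{t=R_X}v$. Then we pair with $P$ using the flat coordinate pairing $\<V,P\>=V^ip_i$; the momenta pair with coordinate components without the metric. Symmetry of the operator moves it onto $P$, giving \eqref{eq:vodd}. With $t=-u^2$ we get $\sqrt t\cot\sqrt t=u\coth u$. The expansion $1-\frac t3-\frac{t^2}{45}$ follows from the cotangent series.

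The decomposition $\sK^1=\sK^{0,1}\oplus\sK^{1,1}$ follows from $\g=\h\oplus\m$. These fields are all Killing fields: $G$ is the connected isometry group, and a Killing field is determined by its $1$-jet at $o$ (Fact~\ref{it:ffindim}). The $1$-jets of $V^0_A$ and $V^1_v$ are $(0,\ad_A)$ and $(v,0)$ respectively, and together these exhaust the admissible jets. The main care point is keeping consistent the identifications of parallel frames and coordinate frames, and covariant versus contravariant pairing with $P$; the series manipulations are routine.
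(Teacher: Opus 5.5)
Your part (a) is the paper's argument: the Jacobi field with $J(0)=0$, $\dot J(0)=P$ along the radial geodesic, the constant-coefficient equation $\ddot J=-R_XJ$ from $\nabla R=0$, and then the series expansions (your derivation of $c_m$ via $1/\sin^2y=-\frac{d}{dy}\cot y$ is correct).

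For part (b) you take a genuinely different route. The paper writes down $\cV^0_A$ and $\cV^1_v$ and asserts that one checks $\{\cH,\cV^0_A\}=\{\cH,\cV^1_v\}=0$ using \eqref{eq:dH}. It then concludes from $\dim\h+\dim\m=\dim\g$ and the trivial intersection of the two spans. You instead derive the formula. The Killing field attached to $v\in\m$ is, in a parallel frame, the Jacobi field $\cos\sqrt t|_{t=R_X}v$. Multiplying by the inverse of $(d\exp_o)_X=\frac{\sin\sqrt t}{\sqrt t}\big|_{t=R_X}$ from part (a) gives the coordinate components $\sqrt t\cot\sqrt t|_{t=R_X}v$.

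This explains where $\sqrt t\cot\sqrt t$ comes from and reuses the machinery of (a). It also replaces the unwritten Poisson-bracket check, a nontrivial identity between power series in $R_X$, by a two-line computation. The cost is that you need one structural fact the paper's verification avoids: the transvection field of $v\in\m$ has $1$-jet $(v,0)$ at $o$. You assert this without justification; it is standard but deserves a line. For example, the geodesic symmetry $s_o$ satisfies $(s_o)_*V=-V$, while conjugation by $ds_o=-\id$ leaves $(\nabla V)_o$ unchanged. This forces $(\nabla V)_o=0$.

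The identification $\sK^1\cong\g$ (every Killing field comes from $G$) enters both your argument and the paper's dimension count equally.
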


For the Hamiltonian $\cH$ given by~\eqref{eq:Hsym}, relative to the orthonormal basis $\{e_i\}$ for $\m$ such that $X=\sum_i x^i e_i$ and $P= \sum_i p_i e_i$, one has
  \begin{equation}\label{eq:dH}
  \begin{split}
    \frac{\partial}{\partial x^i} \cH(X, P) 
    &= 2 \sum_{a,b=0}^{\infty} c_{a+b+1} R(X,R_X^a P,R_X^b P,e_i), \\
    \frac{\partial}{\partial p_i} \cH(X, P) &= 2 \sum_{m=1}^{\infty} c_m \<R_X^m P,e_i\>.
  \end{split}
  \end{equation}
\begin{proof}
  For assertion~\eqref{it:JacgH}, take a nonzero $X \in \m$. Under $\exp_o$, the line $\gamma(s)=sX$ is a geodesic of $(M,g)$, and the vector field $J(s)= s P^i \partial/\partial x^i$ is a Jacobi field satisfying $J(0)=0,\, \dot{J}(0)=P$, where dot is the derivative relative to the affine parameter $s$. We have $\ddot{J}=-R_XJ$, and as $R_X$ is a constant matrix along $\gamma(s)$, we find $J(t)=(\frac{\sin (s\sqrt{t})}{\sqrt{t}})|_{t=R_X}P$. It follows that $g_{ij}(sX)P^iP^j = s^{-2} \|J(s)\|^2 = \<(\frac{\sin^2 (s\sqrt{t})}{s^2t})|_{t=R_X}P, P\> = \<(\frac{\sin^2 (\sqrt{t})}{t})|_{t=R_{sX}}P, P\>$, which proves the first equation of~\eqref{eq:symnormal}. Then the remaining equations of~\eqref{eq:symnormal} and~\eqref{eq:Hsym} easily follow.

  For assertion~\eqref{it:JacV} one verifies using~\eqref{eq:dH} that the functions $\cV^0_A=\<V^0_A(X),P\>$ and $\cV^1_v=\<V^1_v(X),P\>$ satisfy the equations $\{\cH,\cV^0\}=\{\cH,\cV^1_v\}=0$ and then notices that $\dim \sK^{0,1} + \dim \sK^{1,1} = \dim \h + \dim \m = \dim \g$ and that $\sK^{0,1} \cap \sK^{1,1} = 0$.
\end{proof}

For symmetric spaces of rank one we obtain a closed form for the Hamiltonian, as below.

\begin{corollary} \label{c:rk1}
  Let $(M,g)$ be a compact Riemannian symmetric space of rank one of non-constant curvature, with the metric scaled in such a way that the sectional curvature lies in $[1,4]$.

  In the notation of Proposition~\ref{p:Jac}, the Hamiltonian is given by
  \begin{equation}\label{eq:Hrk1}
     \cH(X,P) =\tfrac12 \|P\|^2 - \phi(\|X\|^2) (\|X\|^2\|P\|^2-\<X,P\>^2) + \tfrac{1}{6\cos^2 \|X\| } R(P,X,X,P),
  \end{equation}
  for $X, P \in \m$, where $\phi(\|X\|^2)= \frac{3 \cos^2\|X\|(\sin^2\|X\|-\|X\|^2)+\|X\|^2\sin^2\|X\|}{6\|X\|^2\cos^2\|X\|\sin^2\|X\|}$. In particular, for the spaces $\bc P^m$ and $\bh P^m$, we have
  \begin{align}\label{eq:Hcpm}
    \cH_{\bc P^m}(X,P) & =\tfrac12 \|P\|^2 + \psi(\|X\|^2) (\|X\|^2\|P\|^2-\<X,P\>^2) + \tfrac{1}{2\cos^2 \|X\| } \<JX,P\>^2, \\
    \cH_{\bh P^m}(X,P) & =\tfrac12 \|P\|^2 + \psi(\|X\|^2) (\|X\|^2\|P\|^2-\<X,P\>^2) + \tfrac{1}{2\cos^2 \|X\| } \sum_{\a=1}^{3}\<J_\a X,P\>^2, \label{eq:Hhpm}
  \end{align}
  respectively, where $J$ is the complex structure on $\m=T_o\bc P^m$ and $\Span(J_1,J_2,J_3)$ is the quaternionic structure on $\m=T_o\bh P^m$, and $\psi(\|X\|^2)= \tfrac{\|X\|^2-\sin^2\|X\|}{2\|X\|^2\sin^2\|X\|}$.
\end{corollary}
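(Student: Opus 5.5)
We start from formula \eqref{eq:Hsym} of Proposition~\ref{p:Jac}, $\cH(X,P) = \<f(R_X)P,P\>$ with $f(t) = \frac{t}{2\sin^2\sqrt t}$, and compute $f(R_X)$ explicitly using the spectral structure of the Jacobi operator on a rank one space. With the curvature normalised to lie in $[1,4]$, fix a nonzero $X$ and decompose $\m = \br X \oplus E_1 \oplus E_4$ into eigenspaces of $R_X$. The eigenvalues are $0$ on $\br X$, $\|X\|^2$ on $E_1$ and $4\|X\|^2$ on $E_4$. Here $E_4 = \Span(J_\a X)$ is the space spanned by the images of $X$ under the complex, quaternionic or octonionic structure, and $E_1$ is its orthogonal complement in $X^\perp$. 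This is the standard description of the Jacobi operator on $\bc P^m$, $\bh P^m$ and $\bo P^2$.

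Write $\pi_0, \pi_1, \pi_4$ for the corresponding orthogonal projections and set $r = \|X\|$. Then
\[
f(R_X) = \tfrac12 \pi_0 + \tfrac{r^2}{2\sin^2 r}\pi_1 + \tfrac{4r^2}{2\sin^2 2r}\pi_4 .
\]
Substituting $\sin^2 2r = 4\sin^2 r\cos^2 r$, the coefficient of $\pi_4$ becomes $\frac{r^2}{2\sin^2 r\cos^2 r}$. Next we express the projections through invariant quadratic forms:
\begin{itemize}
\item $\<\pi_0P,P\> = \<X,P\>^2/r^2$;
\item $\<(\pi_1+\pi_4)P,P\> = (r^2\|P\|^2 - \<X,P\>^2)/r^2$;
\item $R(P,X,X,P) = \<R_XP,P\> = r^2\<\pi_1P,P\> + 4r^2\<\pi_4P,P\>$.
\end{itemize}
Solving these linear relations for $\<\pi_1 P,P\>$ and $\<\pi_4 P,P\>$ and substituting gives a combination of $\|P\|^2$, $\|X\|^2\|P\|^2 - \<X,P\>^2$ and $R(P,X,X,P)$ with coefficients depending only on $r^2$.

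The remaining step is to collect these coefficients. The coefficient of $\|P\|^2$ must be $\frac12$. The coefficient of $R(P,X,X,P)$ should come out as $\frac{1}{3r^2}\bigl(\frac{r^2}{2\sin^2 r\cos^2 r} - \frac{r^2}{2\sin^2 r}\bigr) = \frac{1}{6\cos^2 r}$, as claimed. The coefficient of $r^2\|P\|^2 - \<X,P\>^2$ should then simplify, with some trigonometric manipulation, to $-\phi(r^2)$. This bookkeeping is the only potentially error-prone step. We check it by expanding to low order in $r$ and comparing with $\frac12\|P\|^2 + \frac16 R(P,X,X,P) + \dots$ from \eqref{eq:Hsym}.

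For $\bc P^m$ and $\bh P^m$ we then substitute $R(P,X,X,P) = r^2\|P\|^2 - \<X,P\>^2 + 3\sum_\a \<J_\a X,P\>^2$, the curvature of the Fubini--Study type metric with this normalisation. The $\<J_\a X,P\>^2$ terms acquire the coefficient $\frac{3}{6\cos^2 r} = \frac{1}{2\cos^2 r}$. The coefficient of $r^2\|P\|^2 - \<X,P\>^2$ becomes $-\phi + \frac{1}{6\cos^2 r}$, which should reduce to $\psi(r^2) = \frac{r^2 - \sin^2 r}{2r^2\sin^2 r}$. Equivalently, one can compute directly: $\pi_4 P$ has squared norm $\sum_\a \<J_\a X,P\>^2/r^2$, and the $\cos^{-2}$ terms combine to $\frac{1}{2\cos^2 r}$ minus a $\frac{1}{2\sin^2 r}$ part that merges into $\psi$.
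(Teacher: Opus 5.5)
Your proposal is correct and follows the paper's own proof. Both use the Osserman property that $R_X$ has eigenvalues $0$, $\|X\|^2$ and $4\|X\|^2$. Both rewrite $\frac{t}{2\sin^2\sqrt t}\big|_{t=R_X}$ in terms of $\id$, $R_X$ and the $X$-direction; your projection bookkeeping is just the paper's interpolation on the spectrum written out. Both then substitute the curvature tensors of $\bc P^m$ and $\bh P^m$. The coefficient you left to check does work out: with $a=\frac{r^2}{2\sin^2 r}$ and $b=\frac{r^2}{2\sin^2 r\cos^2 r}$, the coefficient $\frac{1}{r^2}\bigl(\frac{4a-b}{3}-\frac12\bigr)$ reduces to $-\phi(r^2)$ using $\sin^2 r+\cos^2 r=1$.
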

Note that both functions $\phi$ and $\psi$ are analytic, with a removable singularity at zero. 
\begin{proof}
  The curvature tensor of a rank one symmetric space of non-constant curvature has the following Osserman property: the restriction of the Jacobi operator $R_X$ to the subspace $X^\perp$ has two eigenvalues, $\|X\|^2$ and $4\|X\|^2$. It follows that the operator $\frac{t}{2\sin^2(\sqrt{t})}|_{t=R_X}$ from the right-hand side of the formula~\eqref{eq:Hsym}, when restricted to $X^\perp$, has the eigenvalues $\frac{\|X\|^2}{2\sin^2(\|X\|)}$ and $\frac{4\|X\|^2}{2\sin^2(2\|X\|)}$ respectively, with the same eigenspaces as $R_X$, and hence can be expressed as a linear combination of the restrictions of $R_X$ and $\id$ to $X^\perp$ with coefficients being certain functions of $\|X\|$. Computing these functions and then using the fact that $\frac{t}{2\sin^2(\sqrt{t})}|_{t=R_X}(X)=\frac{1}{2}X$ we get~\eqref{eq:Hrk1}. Equations~\eqref{eq:Hcpm} and~\eqref{eq:Hhpm} easily follow as the curvature tensors of $\bc P^m$ and $\bh P^m$, with our scaling, are given by $R(P,X,X,P)=(\|X\|^2\|P\|^2-\<X,P\>^2) + 3 \<J X,P\>^2$ and $R(P,X,X,P)=(\|X\|^2\|P\|^2-\<X,P\>^2) + 3 \sum_{\a=1}^{3}\<J_\a X,P\>^2$, respectively.
\end{proof}

Recall that throughout the paper, we denote a function $T^*M$ corresponding to a contravariant tensor field (say $K$) by the same letter in calligraphic script ($\cK$). Moreover, when working in geodesic normal coordinates, we write $\cK(X,P)$, where $X \in \m$ corresponds to the point of $M$ and $P \in \m$ is the \emph{vector} dual to the corresponding covector, \emph{relative to the inner product $\ip$} on $T_oM$.

Let $(M,g)$ be a connected, simply connected globally symmetric space. For $y \in M$, the geodesic reflection $r_y:M \to M$ at the point $y$ is an isometry. Its induced action $r_y^*$ on the tensor space is an involution on each of $\sK^d(M)$, and relative to geodesic normal coordinates centred at $y$, we have $(r_y^*\cK)(X,P)=\cK(-X,-P)$. We call the $(+1)$-eigenspace (respectively, the $(-1)$-eigenspace) of $r_y^*$ the space of \emph{even} (respectively, \emph{odd}) Killing tenor fields at $y$. It is clear that, relative to any point $y \in M$, a Killing tensor field splits into the sum of an even and an odd Killing tensor fields: $\cK=\frac12 (\cK + r_y^*\cK) + \frac12 (\cK - r_y^*\cK)$.

At any point, this splitting defines the structure of a bi-graded associative, commutative algebra on $\sK(M)$, with the bi-gradation $\sK^{a,d}(M), \; (a,d) \in \mathbb{Z}_2 \times \mathbb{Z}_{\ge 0}$ defined by requiring $\sK^{a,d}(M)$ to be the space of even (respectively, odd) elements of $\sK^d(M)$ when $a=d \mod 2$ (respectively, $a=(d+1) \bmod 2$). One clearly has $\sK^{a_1,d_1} \odot \sK^{a_2,d_2} \subset \sK^{a_1+a_2,d_1+d_2}$.

Given an even (respectively, an odd) contravariant tensor field $K$, we have the Taylor expansion with $b=0$ (respectively, with $b = 1$) relative to geodesic normal coordinates, given by
\begin{equation} \label{eq:taylorK}
\cK(X,P)=\sum_{s=0}^{\infty} K_{b+2s}(X^{b+2s},P^d) \in \sK^{(d+b) \bmod 2,d},
\end{equation}
where we abbreviate a sequence of $k \ge 0$ copies of an element $Y \in \m$ to $Y^k$. Note that for every $s \ge 0$, the term $K_{b+2s}$ in this expansion is a constant tensor of type $(0, b+2s+d)$ on $\m$ which is symmetric in the first $b+2s$ arguments and in the last $d$ arguments. 
Computing the Poisson bracket $\{\cH,\cK\}$ for $\cH$ given in~\eqref{eq:dH} and rearranging the terms we obtain that the tensor field $K$ is Killing if and only the following equations are satisfied for all $s \ge 0$:
\begin{align}
\begin{gathered}\label{eq:bracketHKe}
(2s+2)K_{2s+2}(X^{2s+1},P^{d+1})
  = 2\sum_{m=0}^{s} c_{s+1-m} \Big(- 2m K_{2m}(X^{2m-1},R_X^{s+1-m} P, P^d) \\
  + d K_{2m}\big(X^{2m}, \sum_{k=0}^{s-m} R(X,R_X^k P) R_X^{s-m-k}P ,P^{d-1}\big) \Big), \quad \text{for } b=0,
\end{gathered} \\
\begin{gathered}\label{eq:bracketHKo}
(2s+1)K_{2s+1}(X^{2s},P^{d+1})
  = 2\sum_{m=0}^{s-1} c_{s-m} \Big(- (2m+1) K_{2m+1}(X^{2m},R_X^{s-m} P, P^d) \\
  + d K_{2m+1}\big(X^{2m+1}, \sum_{k=0}^{s-m-1} R(X,R_X^k P) R_X^{s-m-k-1}P ,P^{d-1}\big) \Big), \quad \text{for } b=1,
\end{gathered}
\end{align}
where the coefficients $c_m$, are given by~\eqref{eq:cm}. Equations~\eqref{eq:bracketHKe} and~\eqref{eq:bracketHKo} can be written as a single equation: for all $N \ge 1$, we have:
\begin{equation}\label{eq:bracketHKP}
  \{\tfrac12 \|P\|^2, K_{N}(X^{N},P^d)\}=-\sum_{m=1}^{\lfloor N/2\rfloor} c_m\{\<R_X^m P, P\>, K_{N-2m}(X^{N-2m},P^d)\}.
\end{equation}

One immediate consequence of the equations~\eqref{eq:bracketHKe} and~\eqref{eq:bracketHKo} is as follows. By Lemma~\ref{l:young}, a (constant) tensor $K_N$ of type $(0,N+d)$ symmetric by the first $N$ and by the last $d$ arguments is uniquely determined by its values $K_N(X^{N-1},P^{d+1})$ for $X, P \in \m$, provided $N > d$. It follows that if all the terms $K_N, \, N \le d$, for a Killing tensor field $K$ of rank $d$ are zeros, then $K=0$ (in accordance with Fact~\ref{it:ffindim}).

\begin{lemma} \label{l:young}
  For integers $a > b > 0$, denote $S_{a,b}$ the map on the space of tensors of type $(0,a+b)$ which symmetrizes a given tensor $T$ by the first $a$ and the last $b$ arguments. If for $a',b'$ with $a'+b'=a+b$ and $b \le a' \le a$ we have $S_{a',b'}S_{a,b}T = 0$ for some $T$, then $S_{a,b}T = 0$.
\end{lemma}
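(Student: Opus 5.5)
The plan is to read the statement as a fact about $\mathrm{GL}(V)$-modules, $V=\m^*$, and reduce it via Schur's lemma to a finite check on highest weight vectors. The image $W=\operatorname{Im}S_{a,b}\cong \Sym^aV\otimes\Sym^bV$ is $\mathrm{GL}(V)$-invariant. By the Pieri rule it decomposes multiplicity-freely as
$$W\cong\bigoplus_{k=0}^{b}\mathbb{S}_{(a+b-k,k)}V$$
(when $\dim V<2$ some summands vanish, which is harmless). Likewise
$$W'=\operatorname{Im}S_{a',b'}\cong\bigoplus_{k=0}^{\min(a',b')}\mathbb{S}_{(a+b-k,k)}V .$$
The hypotheses $b\le a'\le a$ give $a'\ge b$ and $b'=a+b-a'\ge b$, so $\min(a',b')\ge b$. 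Hence every irreducible summand of $W$ also occurs, exactly once, in $W'$.

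The operators $S_{a,b}$ and $S_{a',b'}$ are averages over subgroups of the symmetric group $\mathfrak S_{a+b}$ acting on tensor slots. They therefore commute with $\mathrm{GL}(V)$ and are orthogonal projections for the induced inner product. The restriction $S_{a',b'}|_W\colon W\to W'$ is equivariant. By multiplicity-freeness and Schur's lemma, it maps the $k$-th isotypic component of $W$ into the $k$-th isotypic component of $W'$, acting there as a scalar multiple $\lambda_k$ of a fixed isomorphism. It then suffices to show $\lambda_k\ne0$ for $0\le k\le b$, since this makes $S_{a',b'}|_W$ injective, which is exactly the claim. Since $S_{a',b'}$ is an orthogonal projection, $\lambda_k\ne0$ is equivalent to $\<h_k,h'_k\>\ne0$, where $h_k\in W$ and $h'_k\in W'$ are highest weight vectors of weight $(a+b-k,k)$. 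Weight vectors of this weight are unique up to scale in each module.

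Next I write these vectors explicitly, using an orthonormal pair $e_1,e_2$ and viewing tensors of type $(0,a+b)$ as multilinear functions of slots $z_1,\dots,z_{a+b}$. I take $h_k=S_{a,b}f_k$, where $f_k$ is the product of $\<z_i,e_1\>$ over the slots not listed below, times $\prod_{i=1}^{k}\omega(z_i,z_{a+i})$ with $\omega=e_1^*\wedge e_2^*$. This is the usual highest weight vector $x_1^{a-k}y_1^{b-k}(x_1y_2-x_2y_1)^k$. The vector $h'_k=S_{a',b'}f'_k$ is defined the same way, with $a'$ in place of $a$. Using self-adjointness of the projections, $\<h_k,h'_k\>=\<S_{a',b'}S_{a,b}f_k,f'_k\>$. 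Expanding, both are combinations of basis tensors $e_{\epsilon_1}\otimes\dots\otimes e_{\epsilon_{a+b}}$ containing exactly $k$ factors $e_2$. The coefficient of such a tensor in $h_k$ depends only on $j$, the number of $e_2$'s among the first $a$ slots. It equals $(-1)^{j}\binom kj\big/\bigl(\binom aj\binom b{k-j}\bigr)$ up to a common positive factor, and similarly for $h'_k$ with $(a',b')$.

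The main obstacle is showing that the resulting finite sum, over positions of the $k$ copies of $e_2$ and grouped by the two intersection numbers with $[1,a]$ and $[1,a']$, does not vanish. I would try to rewrite it as a hypergeometric-type sum and evaluate it in closed form, hoping it is a ratio of products of factorials that is manifestly nonzero. If that fails, I would instead compute $\lambda_k$ as an explicit eigenvalue via $\mathfrak S_{a+b}$-characters (two-row Young symmetrizers). Either way, the inequality $k\le b\le\min(a',b')$ is precisely what keeps the relevant binomial factors nonzero.
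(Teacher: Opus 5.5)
Your outline matches the paper's: the paper also decomposes $\operatorname{Im}S_{a,b}\cong\Sym^a V\otimes\Sym^b V$ by Pieri into two-row summands and then asserts that $S_{a',b'}$ is injective on each summand. Your Schur reduction of that injectivity to $\lambda_k\ne0$, equivalently $\langle h_k,h'_k\rangle\ne0$, is correct. One wording fix: it is \emph{highest} weight vectors that are unique up to scale, since the weight-$(a+b-k,k)$ space of $W$ has dimension $k+1$.

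The gap is that you never show $\langle h_k,h'_k\rangle\ne0$, and that is the entire content of the lemma. Your last sentence does not supply it: $k\le b\le\min(a',b')$ only guarantees that the $k$-th summand occurs in $W'$, i.e.\ that $h'_k$ exists. Group terms as you propose, letting $p,q,r$ be the numbers of $e_2$'s in slots $[1,a']$, $[a'+1,a]$, $[a+1,a+b]$. Up to a positive factor,
\[
\langle h_k,h'_k\rangle=\sum_{p+q+r=k}(-1)^{q}\binom{a-a'}{q}\frac{\binom{k}{p+q}\binom{k}{p}}{\binom{a}{p+q}\binom{b'}{q+r}} .
\]
This is an alternating sum, so nonvanishing of the individual binomials proves nothing. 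Nor is nonvanishing automatic. In $V^{\otimes(a+b)}$ this irreducible occurs with multiplicity equal to the number of standard tableaux of shape $(a+b-k,k)$, which exceeds $1$ for $k\ge1$. So the copies cut out by $W$ and $W'$ could a priori be orthogonal.

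A short way to close the gap, which also makes Pieri unnecessary, is polarization plus $\mathfrak{sl}_2$. Put $U=S_{a,b}T$ and $F(X,P)=U(X^a,P^b)$; this is a polynomial of bidegree $(a,b)$ that determines $U$. A tensor symmetric in the first $a'$ and the last $b'$ slots is determined by its values at $(X^{a'},P^{b'})$. Since $a'\le a$,
\[
(S_{a',b'}U)(X^{a'},P^{b'})=U(X^{a'},P^{a-a'},P^{b})=\tfrac{a'!}{a!}\,(P\cdot\nabla_X)^{a-a'}F .
\]
So the hypothesis says $f^{a-a'}F=0$, where $f=P\cdot\nabla_X$. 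The operators $f$, $e=X\cdot\nabla_P$ and $h=[e,f]=X\cdot\nabla_X-P\cdot\nabla_P$ form an $\mathfrak{sl}_2$-triple acting on the finite-dimensional space of polynomials of total degree $a+b$ in $(X,P)$.

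Now $F$ has $h$-weight $a-b\ge0$, so its components lie in irreducibles of highest weight $\ell\ge a-b$. On each of these, $f^{m}$ is injective from weight $a-b$ as long as $a-b-2m\ge-\ell$. For $m=a-a'$ we have $a-b-2m=a'-b'\ge-(a-b)\ge-\ell$, and the middle inequality is exactly $a'\ge b$. Hence $F=0$, i.e.\ $S_{a,b}T=0$. Under the $\mathrm{GL}(V)\times\mathrm{GL}_2$ duality, the $k$-th Pieri summand is the constituent with $\ell=a+b-2k$. So this also shows, in your notation, that $\lambda_k\ne0$ precisely when $k\le a'$.
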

\begin{proof}
  The image of the map $S_{a,b}$ is the tensor product of the spaces of symmetric tensors of type $(0,a)$ and $(0,b)$. By the Pieri's formula, this product decomposes into the sum direct sum of the irreducible modules of the action of the permutation group $\cS_{a+b}$ corresponding to the Young tableaux consisting of no more than two rows, with the first row being of size at least $a$ \cite[Section~I.4, Appendix~A]{FH}. Then the action of $S_{a',b'}$, when $b \le a' \le a$ is injective on each of these modules, and the claim follows.
\end{proof}

\section{Structural results. Proof of Theorem~\ref{t:prod} and Theorem~\ref{t:dual}}
\label{s:structure}

\subsection{Duality}
\label{ss:dual}

Recall that two simply connected irreducible globally symmetric spaces $(M,g)$ and $(M',g')$ are called \emph{dual}, if there is a linear isometry $\iota: T_o M \to T_{o'}M'$ (where $o \in M$ and $o' \in M'$ are arbitrary) such that for any $X,Y,Z \in T_oM$ one has $R'(\iota X,\iota Y)\iota Z = - R(X,Y)Z$, where $R$ and $R'$ are the curvature tensors of the spaces $(M,g)$ and $(M',g')$, respectively. It is known that in a pair of dual spaces, one is of compact type and another one is of non-compact type.

We extend this definition to arbitrary connected, simply connected, globally symmetric spaces, not necessarily irreducible, in the obvious way. Namely, given the de Rham decomposition of such a space $(M,g)=(M_0,g_0) \times (M_1,g_1) \times \dots \times (M_m,g_m)$, where $(M_0,g_0)$ is Euclidean and $(M_i,g_i),\, i=1, \dots, m$, are irreducible, we define the dual connected, simply connected, globally symmetric space $(M',g')$ by setting $(M',g')=(M_0,g_0) \times (M_1',g_1') \times \dots \times (M_p',g_m')$. Note that for $o \in M$ and $o' \in M'$ there still exists a linear isometry $\iota: T_o M \to T_{o'}M'$ such that $\iota^* R = - R'$, for the corresponding curvature tensors.

\begin{proof}[Proof of Theorem~\ref{t:dual}]
  For $o \in M$ and $o' \in M'$, we identify the tangent spaces $T_oM$ and $T_{o'}M'$ via a linear isometry $\iota$ such that $\iota^* R = - R'$. We choose small neighbourhoods $U(o) \subset M$ and $U'(o') \subset M'$ on which both $\exp_o$ and $\exp_{o'}$ are injective and such that $U'(o') = \exp_{o'} \circ \iota \circ \exp_{o'}^{-1}U(o)$ and introduce the geodesic normal coordinates on $M$ centred at $o$ and on $M'$ centred at $o'$. As $R'_X=-R_X$ under our identification, equation~\eqref{eq:Hsym} tells us that the Hamiltonians $\cH$ and $\cH'$ of $M$ and of $M'$, respectively, relative to the corresponding geodesic normal coordinates, are related by $\cH'(X,P)=\cH(\ir X,P)$. It follows that $K = K(X) \in \sK(U) \otimes \bc$ if and only if over $\bc$ we have $\{\cH(X,P),\cK(X,P)\}=0$ if and only if $\{\cH(\ir X,P),\cK(\ir X,P)\}=0$ if and only if $\{\cH'(X,P),\cK(\ir X,P)\}=0$ if and only if $K(\ir X) \in \sK(U') \otimes \bc$.

  The map $K(X) \mapsto K'(X)=K(\ir X,P)$ is an isomorphism of the complex, associative, commutative, graded algebras $\sK(U) \otimes \bc$ and $\sK(U') \otimes \bc$, and it extends to the isomorphism of $\sK(M) \otimes \bc$ and $\sK(M') \otimes \bc$, by Lemma~\ref{l:ltog} (note that the statement of the Lemma does not depend on whether the ground field is $\br$ or $\bc$ as the equations~\eqref{eq:bracketHKe} and~\eqref{eq:bracketHKo} defining the property of a tensor field to be Killing have real coefficients).
\end{proof}

\begin{remark} \label{rem:dualR}
  The isomorphism between the complex algebras $\sK(M) \otimes \bc$ and $\sK(M') \otimes \bc$ constructed in the proof does not descend to an isomorphism between the real algebras $\sK(M)$ and $\sK(M')$. Instead, one has linear space isomorphisms given as follows. An element $K \in \sK^{d \bmod 2,d}(M)$ corresponding to the function $\cK(X,P)=\sum_{s=0}^{\infty} K_{2s}(X^{2s},P^d)$ is mapped to the element $K' \in \sK^{d \bmod 2,d}(M')$ corresponding to the function $\cK'(X,P)=\sum_{s=0}^{\infty} (-1)^s K_{2s}(X^{2s},P^d)$, and similarly, an element $K \in \sK^{(d+1) \bmod 2,d}(M)$ with the function $\cK(X,P)=\sum_{s=0}^{\infty} K_{2s+1}(X^{2s+1},P^d)$, to the element $K' \in \sK^{(d+1) \bmod 2,d}(M')$ with the function $\cK'(X,P)=\sum_{s=0}^{\infty} (-1)^s K_{2s+1}(X^{2s+1},P^d)$.

  Clearly, these maps preserve the dimensions of the homogeneous components and the property of being decomposable.
\end{remark}

\subsection{De Rham decomposition}
\label{ss:deRham}

As it is shown in~\cite[Theorem~1.1]{MN2}, the algebra of Killing tensor fields on the product of two connected Riemannian spaces, at least one of which is compact, is the symmetric product of the algebras of Killing tensor fields on the factors (with no assumptions, an analogous statement is false even for spaces without flat factors).

\begin{proof}[Proof of Theorem~\ref{t:prod}]
  By~\cite[Theorem~1.1]{MN2}, it suffices to establish the claim in the case when all the factors are noncompact (either Euclidean or irreducible of noncompact type) and there are at least two of them. But then using Theorem~\ref{t:dual} and the explicit formulas given in Remark~\ref{rem:dualR} we can pass to the dual $M'$ of $M$ which is the product of compact irreducible symmetric spaces and possibly a Euclidean factor, apply~\cite[Theorem~1.1]{MN2} to $M'$, and then pass back to $M$ using the formulas in Remark~\ref{rem:dualR} again.
\end{proof}

As a consequence, we have the following useful fact.

\begin{corollary} \label{c:torus}
  Let $(M,g)$ be a connected, simply connected, globally symmetric space without flat factors, and let $X, P \in \m (\, = T_oM)$ be such that $[X, P]=0$ (equivalently, $R(X,P)=0$). If for $K \in \sK(M)$ we have $\cK(0,P) = 0$, then
  $\cK(X,P)=0$, in the notation of Proposition~\ref{p:Jac}.
\end{corollary}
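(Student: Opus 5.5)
The idea is to restrict $K$ to the flat totally geodesic submanifold through $o$ generated by $X$ and $P$, and then apply Theorem~\ref{t:prod} to a flat factor. Since $[X,P]=0$, the subspace $\Span(X,P)$ lies in a maximal abelian subspace $\mathfrak a \subset \m$. The image $F=\exp_o(\mathfrak a)$ is a flat, totally geodesic submanifold, a torus in the compact case and a Euclidean space in the noncompact case. In geodesic normal coordinates at $o$, the points of $F$ are exactly the $X\in\mathfrak a$, and its tangent vectors are $P\in\mathfrak a$. By Fact~\ref{it:ftg}, the covariant Killing tensor $K^\flat$ restricts to a Killing tensor on $F$. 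In the normal coordinates, the restriction is the function $(X,P)\mapsto \cK(X,P)$ for $X,P\in\mathfrak a$. Here I use the metric identification of $P$: on $F$, the metric in normal coordinates is Euclidean, since $R_X$ vanishes on $\mathfrak a$ for $X\in\mathfrak a$ by~\eqref{eq:symnormal}. So vectors and covectors in $\mathfrak a$ are matched consistently, and evaluating on $P\in\mathfrak a$ gives the restricted polynomial.

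Next I use the structure of Killing tensors on flat space. The restricted tensor lives on a flat space, and passing to the universal cover (or working on a neighbourhood, which is enough) we may regard it as Euclidean. There every Killing tensor is a polynomial in translations and rotations. Hence, in linear coordinates, the function $\tilde\cK(X,P)$ is a polynomial in $p_i$ and $x^ip_j-x^jp_i$. What we need is that each rotation generator $x^ip_j-x^jp_i$ vanishes whenever $X$ and $P$ are parallel. That alone is not enough, since $X$ and $P$ need not be parallel. So I plan to reduce to a line instead: restrict further to the $2$-dimensional flat $\Span(X,P)$. This does not settle it either, because Euclidean Killing tensors genuinely depend on $X$.

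So the flat structure alone is insufficient, and one must use that $M$ has no flat factors. This is the main obstacle. My plan is to exploit $\cK(0,P)=0$ for \emph{all} $P$, since I read the hypothesis as an identity in $P$. That forces the degree-zero term $K_0$ to vanish. The task is then to show that the restriction to $F$ of a Killing tensor on $M$ depends only on its value at $o$, that is, it is translation-invariant, with no rotational part.

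The argument I have in mind is as follows. The restriction map $\sK(M)\to\sK(F)$ is equivariant with respect to the isometries of $M$ preserving $F$. These include the translations of $F$ (the transvections $\exp(\mathfrak a)$) and the Weyl-type isotropy, but not the continuous rotations of $\mathfrak a$. I would decompose $\sK(M)$ as a $G$-module, and note that on $F$ the transvections act trivially on $\cK(X,P)$ restricted to the torus, because the restriction of Killing vector fields $V^1_v$ for $v\in\mathfrak a$ is the constant field. The key point is to show that rotational Killing fields of $F$ cannot arise by restriction. To do this I would check via~\eqref{eq:vodd} and~\eqref{eq:veven} that every Killing vector field of $M$, restricted to $F$ and projected to $TF$, is a translation. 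Explicitly, $V^0_A$ gives $[A,X]\in\m$, which is orthogonal to $\mathfrak a$ for $X\in\mathfrak a$. For $V^1_v$, the projection to $\mathfrak a$ of $(u\coth u)|_{\ad_X}P$ paired with $v$ is $\<v_{\mathfrak a},P\>$.

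Using the no-flat-factor assumption and the density of regular elements, I would extend this from vector fields to all of $\sK(M)$. I would argue this through the Taylor recursion~\eqref{eq:bracketHKP} restricted to $X,P\in\mathfrak a$: the right-hand side involves $R_X$, $R(X,P)$ with $X,P\in\mathfrak a$, which I will show vanish when evaluated on $\mathfrak a$ after careful bookkeeping. Then $K_N(X^N,P^d)$ restricted to $\mathfrak a$ solves $\{\|P\|^2,\cdot\}=0$ with $K_0|_{\mathfrak a}=0$. Proving that this forces vanishing on $\mathfrak a$ for $N\ge1$ is exactly where the flat-factor exclusion must enter, and it is the step I expect to be hardest.
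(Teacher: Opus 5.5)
Your proposal has a genuine gap, and it is in the step you flag as hardest. Along your route that step cannot be done at all. If you restrict \eqref{eq:bracketHKP} to $X,P\in\mathfrak a$, the right-hand side does drop out: each $\<R_X^mP,P\>=\pm\|\ad_X^mP\|^2$ has vanishing gradient wherever $[X,P]=0$. What survives, however, is only $K_N(X^{N-1},P^{d+1})=0$ on $\mathfrak a$. That is the Euclidean Killing equation for $\cK|_{\mathfrak a}$, which Fact~\ref{it:ftg} already gives you. Its solution space contains the rotational polynomials built from $x^ip_j-x^jp_i$, so jet information at $o$ restricted to $\mathfrak a$ cannot force $K_N|_{\mathfrak a}=0$ for $N\ge 1$.

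The equivariance idea does not close the gap either. Checking that restricted Killing vector fields are translations only covers decomposable tensors, and $\sK(M)$ can contain indecomposable ones (e.g.\ on $\bo P^2$). The statement ``transvections act trivially on the restriction'' is the claim itself. The information you need is global, and you discarded it when you wrote ``passing to the universal cover (or working on a neighbourhood, which is enough)''.

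The missing idea is compactness of the maximal flat, and the paper's proof runs as follows.
\begin{enumerate}
\item Reduce to $M$ irreducible via Theorem~\ref{t:prod}. This is where the absence of flat factors is used. On $\br^n$ the statement is false: take $K$ the rotation with $\cK=x^1p_2-x^2p_1$.
\item Reduce to compact type via Theorem~\ref{t:dual} and Remark~\ref{rem:dualR}. The correspondence $\cK(X,P)\mapsto\cK(\ir X,P)$ preserves both the condition $[X,P]=0$ and the value at $X=0$.
\item Now $F=\exp_o(\t)$ is a compact flat torus. The lift of $K|_F$ to $\t$ is a Euclidean Killing tensor whose polynomial coefficients are invariant under the period lattice. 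They are therefore bounded, hence constant, so $K|_F$ is parallel.
\item In normal coordinates this says $\cK(X,P)=\cK(0,P)$ for all $X,P\in\t$, which gives the claim for the given $P$. You do not need to read the hypothesis as an identity in $P$.
\end{enumerate}
In the noncompact case $F$ is a Euclidean space and the periodicity argument is unavailable. That is exactly why the detour through the compact dual is needed, and your plan, which treats both cases together, would not handle it.
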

\begin{proof}
  If $M$ is reducible, then the condition $R(X,P)=0$ is equivalent to the same condition for the projections of $X$ and $P$ to every factor, and so by Theorem~\ref{t:prod} it is sufficient to establish the claim assuming $M$ is irreducible. Moreover, if $M$ is irreducible and is of noncompact type, then the condition $R(X,P)=0$ is equivalent to $R'(\iota^* X, \iota^* P)=0$, where $R'$ is the curvature tensor of the dual space $M'$ and $\iota$ is the corresponding linear isomorphism. By Theorem~\ref{t:dual} (or by the explicit formulas given in Remark~\ref{rem:dualR}), we can assume $M$ to be of compact type.

  As $X, P \in \m$ commute, they lie in some Cartan subalgebra $\t \subset \m$. The subalgebra $\t$ is tangent to a totally geodesic flat torus $F \subset M$ of dimension $\rk M$. The restriction of the Killing tensor field $K$ is a Killing tensor field on $F$ (Fact~\ref{it:ftg}). But any Killing tensor field on a flat torus is parallel, and the claim follows.
\end{proof}

\section{Top slot Killing tensor fields. Proof of Theorem~\ref{t:quadratic}}
\label{s:top}

Although~\eqref{eq:bracketHKe} and~\eqref{eq:bracketHKo} are finite systems of linear equations in a finite dimensional space, they are very difficult to analyse. We single out a subspace of Killing tensor fields for which the these equations take a nice and elegant form.

Let $(M,g)$ be a connected, simply connected, globally symmetric space. Recall that a Killing tensor field $K \in \sK^d(M)$ is said to be \emph{top slot at a point $y \in M$}, if the Taylor decomposition of $\cK$ at $y$ contains no terms of degree less than $d$ in $X$. For a top slot tensor field, the Taylor expansion given by~\eqref{eq:taylorK} in geodesic normal coordinates has the form
\begin{equation} \label{eq:taylorKts}
\cK(X,P)=\sum_{s=0}^{\infty} K_{d+2s}(X^{d+2s},P^d),
\end{equation}

We denote $\sL^d \subset \sK^d(M)$ the space of top slot Killing tensor fields of rank $d$ at the point $o \in M$, the projection of the identity element of $G$ in the presentation $M=G/H$. Note that the space $\sL^d$ is an $H$-module and that it is nontrivial: for example, it contains all the symmetrised products of $d$ Killing vector fields vanishing at $o$ (such Killing fields are given by~\eqref{eq:veven}).

We prove that the Taylor expansion of the top slot tensor field consists of a single homogeneous term, so that in formula~\eqref{eq:taylorKts}, $K_{d+2s}=0$ when $s>0$, and that the integrability condition for that term take especially nice, explicit form. 

\begin{proposition} \label{p:top}
  Let $(M,g)$ be a connected, simply connected, globally symmetric space. Then the following holds in the notation of Proposition~\ref{p:Jac}, relative to geodesic normal coordinates centered at $o$.
  \begin{enumerate}[label=\emph{(\alph*)},ref=\alph*]
    \item \label{it:top1}
    For any $K \in \sL^d$, the Taylor expansion at $o$ of the function $\cK$ is given by $\cK(X,P)=K_d(X^d,P^d)$, where $K_d(X^d,P^d)$ is a constant (covariant) tensor on $\m = T_oM$, symmetric in the first $d$ and in the last $d$ arguments.

    \item \label{it:topint}
    Conversely, let $K$ be a contravariant tensor field of rank $d$ on $M$ such that the Taylor expansion at $o$ of the function $\cK$ is given by $\cK(X,P)=K_d(X^d,P^d)$, where $K_d$ is a constant tensor on $\m$ which is symmetric in the first $d$ and in the last $d$ arguments.

    Then $K$ is Killing if and only if the following $d+1$ linear equations are satisfied: for all $s =0, \dots, d$ and all $X, P \in \m$,
    \begin{equation} \label{eq:topint}
      d \, K_d(X^{d-1},P^{d-s+1}, (R_X P)^s) + s K_d(X^d,P^{d-s}, (R_X P)^{s-1},R(X,P)P) = 0,
    \end{equation}
  \end{enumerate}
\end{proposition}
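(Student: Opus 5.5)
Throughout I work with the recursion \eqref{eq:bracketHKP}, specialised to a top slot field. By \eqref{eq:taylorKts} the only possibly nonzero terms are $K_N$ with $N=d+2s$, and the parity of $K$ at $o$ is the parity of $d$ in $X$. The identity $\{\cH,\cK\}=0$ then splits by degree in $X$. The strategy is to show that the $X$-degree $d-1$ component of the bracket gives the leading condition on $K_d$. The higher components must then both force $K_{d+2s}=0$ for $s\ge1$ and produce the remaining equations \eqref{eq:topint}.

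\textbf{Degree $d-1$ (the case $N=d$ of \eqref{eq:bracketHKP}).} The right-hand side involves $K_{d-2m}$, which vanish. Hence
\[
\{\tfrac12\|P\|^2,K_d(X^d,P^d)\}=0,
\]
that is, $d\,K_d(X^{d-1},P^{d+1})=0$. This is exactly \eqref{eq:topint} with $s=0$.

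\textbf{Degree $N=d+2$.} The equation reads
\[
(d+2)K_{d+2}(X^{d+1},P^{d+1}) = -c_1\{\<R_XP,P\>,K_d(X^d,P^d)\}.
\]
Computing the bracket, the right-hand side is a combination of $K_d(X^{d-1},R_XP,P^d)$ and $K_d(X^d,R(X,P)P,P^{d-1})$; the first of these uses the symmetry $\<R(\cdot,X)X,P\>=\<R_XP,\cdot\>$.

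\textbf{Showing $K_{d+2s}=0$ for $s\ge1$.} The key idea is a generating-function argument. Set $F(t)=\sum_s t^s K_{d+2s}$ and substitute the closed form $\frac{t}{2\sin^2\sqrt t}$ of the Hamiltonian. The claim is that the full system is equivalent to: $K_d$ satisfies the $d+1$ equations \eqref{eq:topint}, and all higher terms vanish.

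\begin{enumerate}
\item[(i)] First I would show that for a top slot $K$, the tensor $\cK'(X,P)=K_d(X^d,P^d)$ is itself Killing. Equivalently, $\cK-\cK'$ is Killing only if it is zero.
\item[(ii)] To get (i), I would use the $H$-invariance and the dilation structure. The flow of the Hamiltonian is homogeneous, and the reflection $r_o$ together with the rescaling $X\mapsto\lambda X$, $P\mapsto\lambda^{-1}P$ preserves top-slotness.
\item[(iii)] Rescaling does not preserve $\cH$ on a curved space, so (ii) does not finish the argument. Instead I would argue directly: $\cK-\cK'$ has Taylor terms only in degrees $\ge d+2$. By Lemma~\ref{l:young} and the consequence stated after \eqref{eq:bracketHKP}, the lowest nonzero term $K_N$ with $N>d$ would be determined by lower ones via \eqref{eq:bracketHKP}.
\end{enumerate}

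Concretely, I would prove by induction on $s$ that the right-hand side of \eqref{eq:bracketHKP} at level $N=d+2s$ vanishes, given the equations \eqref{eq:topint} on $K_d$. Lemma~\ref{l:young} then gives $K_{d+2s}=0$. The mechanism is that $\{\<R_X^mP,P\>,K_d\}$ expands, via \eqref{eq:dH}, into sums of terms of the form $K_d(X^{d-1},P^{d-j+1},(R_XP)^j)$ and $K_d(X^d,\dots,R(X,P)P)$. With the right weights these are precisely the left-hand sides of \eqref{eq:topint}, after using $R_X^{a}P$ and the identity $R(X,R_X^kP)=\ldots$, which holds because $\ad_X$ preserves the eigenspaces of $R_X=-\ad_X^2$.

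\textbf{Main obstacle.} The hard step is combinatorial. One must verify that the vanishing of the $d+1$ quantities in \eqref{eq:topint} (the one with $s=0$, plus those in which $R_XP$ appears $s$ times) makes every level of \eqref{eq:bracketHKP} consistent with $K_{d+2s}=0$. Conversely, one must show that consistency at levels $N=d+2,\dots,d+2d$ forces each equation in \eqref{eq:topint} separately.

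For this I would diagonalise: restrict to $X$ in a Cartan subalgebra and $P$ in a sum of root spaces, so that $R_X$ acts by scalars $\alpha(X)^2$. The coefficient $c_m$-series then becomes a power series in these eigenvalues. Matching coefficients of independent monomials in the eigenvalues separates the equations, and the expansion terminates at $s=d$ because $K_d$ has only $d$ slots in $P$.

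For (b), the converse, Killingness is the vanishing of every level of \eqref{eq:bracketHKP} with $K_N=0$ for $N\ne d$. This is exactly the system just analysed, so (b) follows from the same computation.
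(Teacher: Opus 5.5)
There is a genuine gap in part (a), and it sits at the heart of the proposition. Your induction shows that the right-hand side of \eqref{eq:bracketHKP} at level $N=d+2s$ vanishes \emph{given} all the equations \eqref{eq:topint} on $K_d$, and only then concludes $K_{d+2s}=0$. But for a top slot Killing field, the only condition on $K_d$ available a priori is the $s=0$ equation, which comes from level $N=d$. The remaining conditions amount to $\{\<R_X^mP,P\>,K_d\}=0$ for $m\ge1$. Each of these sits at level $d+2m$ opposite $\mathcal{A}K_{d+2m}$, where $\mathcal{A}=\{\tfrac12\|P\|^2,\cdot\}$, so they follow from the Killing property only once you already know $K_{d+2m}=0$. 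The argument is therefore circular.

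Your fallback, that the lowest nonzero $K_N$ with $N>d$ is ``determined by lower ones'', does not close the gap. Injectivity of $\mathcal{A}$ on such tensors only says that $K_{d+2q}$ is the unique preimage of $-c_q\{\<R_X^qP,P\>,K_d\}$. It does not say that this source term vanishes, and for general Killing fields the higher Taylor terms are indeed nonzero. The ``consistency'' formulation in your last paragraphs presupposes exactly what has to be proved.

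The missing idea is as follows. Take $q\ge1$ minimal with $K_{d+2q}\ne0$, and apply $\mathcal{A}^{2q-1}$ to $\mathcal{A}K_{d+2q}=-c_q\{\<R_X^qP,P\>,K_d\}$.
\begin{enumerate}
\item Since $\mathcal{A}$ is a derivation of the Poisson bracket and $\mathcal{A}K_d=0$, the right-hand side becomes $-c_q\{\mathcal{A}^{2q-1}\<R_X^qP,P\>,K_d\}$.
\item The function $\mathcal{A}^{2q-1}\<R_X^qP,P\>$ is a constant multiple of the derivative of $Y\mapsto\<R_Y^qP,P\>$ at $Y=P$ in the direction $X$. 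This derivative is zero because $R_PP=0$.
\item Hence $\mathcal{A}^{2q}K_{d+2q}=0$, that is, $K_{d+2q}(X^d,P^{d+2q})=0$.
\item Lemma~\ref{l:young} with $a'=d$ then forces $K_{d+2q}=0$. Note that this uses injectivity of $\mathcal{A}^{2q}$, not merely of $\mathcal{A}$.
\end{enumerate}
This argument uses only the $s=0$ equation. Once all higher terms are known to vanish, $\{\<R_X^mP,P\>,K_d\}=0$ for every $m$ follows directly, since all $c_m\ne0$.

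For part (b), you do not actually supply the reduction of the infinitely many conditions $\{\<R_X^NP,P\>,K_d\}=0$, $N\ge0$, to the $d+1$ equations \eqref{eq:topint}. ``Matching coefficients of independent monomials in the eigenvalues'' does not work as stated. For fixed $X$, the eigenvalues of $R_X$ are fixed numbers, not independent variables, and the remaining $P$-slots of $K_d$ mix the eigencomponents. Your $F(t)=\sum_s t^sK_{d+2s}$ also generates the wrong object.

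What works is to generate the \emph{equations} rather than the Taylor terms. Multiply the $N$-th condition by $t^N$ and sum. This gives $-K_d(X^{d-1},Q,P^d)+tK_d(X^d,R(X,Q)Q,P^{d-1})=0$ with $Q=(\id-tR_X)^{-1}P$. Now substitute $P=(\id-tR_X)Q$. This turns the identity into a polynomial of degree $d$ in $t$, valid for arbitrary $X$ and $Q$. Its $d+1$ coefficients are, up to nonzero factors, exactly \eqref{eq:topint}. The converse follows by reading the argument backwards.
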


We emphasize that the fact that the tensor $K_d$ on $\m$ is covariant, as opposed to the fact that the tensor field $K$ is contravariant, should not be confusing; we lift and lower the indices in the tensor algebra over $\m$ using the Euclidean inner product $\ip$ on $\m$, not the metric $g(X)$ given by~\eqref{eq:symnormal}.

\begin{proof}
  For assertion~\eqref{it:top1}, take $K \in \sL^d$. Seeking a contradiction, suppose that $q > 0$ is the smallest number for which the term $K_{d+2q}$ of the Taylor expansion~\eqref{eq:taylorKts} is nonzero, so that $\cK(X,P)=K_d(X^d,P^d) + K_{d+2q}(X^{d+2q},P^d) + \sum_{s=q+1}^{\infty} K_{d+2s}(X^{d+2s},P^d)$, where we can also assume that $K_d \ne 0$, as otherwise $K=0$ by Fact~\ref{it:fgerm}. From~\eqref{eq:bracketHKP} with $N=d$ and $N=d+2q$ we obtain, respectively,
  \begin{gather}\label{eq:PoisKd}
    \{\tfrac12 \|P\|^2, K_d(X^d,P^d)\}= 0,\\
    \{\tfrac12 \|P\|^2, K_{d+2q}(X^{d+2q},P^d)\}=- c_q\{\<R_X^q P, P\>, K_d(X^d,P^d)\}. \label{eq:PoisKd2q}
  \end{gather}
  Act on both sides of equation~\eqref{eq:PoisKd2q} by the operator $\mathcal{A}^{2q-1}$, where $\mathcal{A}=\{\tfrac12 \|P\|^2, \cdot \}$. By~\eqref{eq:PoisKd} and the Jacobi identity we obtain that the right-hand side of the resulting equation equals $- c_q\{\mathcal{A}^{2q-1}\<R_X^q P, P\>,K_d(X^d,P^d)\}$. But $\mathcal{A}^{2q-1}\<R_X^q P, P\>=0$. Indeed, the action of the operator $\mathcal{A}$ on a polynomial function of $X$ and $P$ has an effect of replacing separately every single argument $X$ by $P$ and then taking the negative of the sum of the resulting expressions. Then $\mathcal{A}^{2q-1}\<R_X^q P, P\>= (2q-1)! \<R_P^q X, P\> = 0$. Thus we obtain $\mathcal{A}^{2q}(K_{d+2q}(X^{d+2q},P^d))=0$ which gives $K_{d+2q}(X^d,P^{d+2q})=0$. As the tensor $K_{d+2q}$ is symmetric by the first $d+2q$ arguments and by the last $d$ arguments, Lemma~\ref{l:young} implies that $K_{d+2q} = 0$, a contradiction.

  To prove assertion~\eqref{it:topint}, we use one of the equations~\eqref{eq:bracketHKe} or~\eqref{eq:bracketHKo} (depending on the parity of $d$). They give that $K$ is Killing if and only for all $N \ge 0$, one has $-K_d(X^{d-1},R_X^N P, P^d)+K_d(X^d,\sum_{k=0}^{N-1} R(X,R_X^k P) R_X^{N-1-k} P, P^{d-1})=0$. Multiplying this equation by $t^N$ and taking the sum by $N$ from $0$ to infinity we obtain
  \begin{equation*}
  -K_d(X^{d-1},(\id-tR_X)^{-1}P, P^d)+ t K_d(X^d, R(X,(\id-tR_X)^{-1} P) (\id-tR_X)^{-1}P, P^{d-1})=0
  \end{equation*}
  (for small $t \in \br$). Denoting $Q=(\id-tR_X)^{-1}P$ we get $-K_d(X^{d-1},Q, ((\id-tR_X)Q)^d)+ t K^d(X^d, R(X,Q) Q, ((\id-tR_X)Q)^{d-1})=0$. The terms of degree $s=0, \dots, d$ in $t$ in this polynomial equation give equations~\eqref{eq:topint}.
\end{proof}


For symmetric spaces of rank one, the conditions~\eqref{eq:topint} can be simplified even further, to just two equations.

\begin{corollary} \label{c:rk1top}
  Let $(M,g)$ be a compact, connected, simply connected Riemannian symmetric space of rank one of non-constant curvature with the metric scaled in such a way that the sectional curvature lies in $[1,4]$.

  Suppose that $K$ is a contravariant tensor field of rank $d$ on $M$ such that the Taylor expansion at $o$ of the function $\cK$ is given by $\cK(X,P)=K_d(X^d,P^d)$, where $K_d$ is a constant tensor on $\m$ symmetric in the first $d$ and in the last $d$ arguments.

  Then $K$ is Killing if and only if for all $X, P \in \m$, we have
    \begin{equation} \label{eq:toprk1}
      K_d(X^{d-1},P^{d+1}) = 0, \qquad K_d(X^{d-1},R(P,X)X, P^d) = K_d(X^d, P^{d-1},R(X,P)P),
    \end{equation}
  or equivalently, $\{\|P\|^2,K_d(X^d,P^d)\}=\{R(P,X,X,P),K_d(X^d,P^d)\}=0$.
\end{corollary}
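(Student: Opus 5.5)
The plan is to show that, in rank one, the $d+1$ equations~\eqref{eq:topint} of Proposition~\ref{p:top}\eqref{it:topint} are all consequences of the two lowest ones, $s=0$ and $s=1$. It is cleaner to work one step earlier, with the equivalent family from the proof of Proposition~\ref{p:top}: for all $N \ge 0$,
\begin{equation*}
E_N:=-K_d(X^{d-1},R_X^N P, P^d)+K_d\Big(X^d,\sum_{k=0}^{N-1} R(X,R_X^k P) R_X^{N-1-k} P, P^{d-1}\Big)=0 .
\end{equation*}
Here $E_0=0$ is the first equation of~\eqref{eq:toprk1}. Using $R_XP=R(P,X)X$ and $R_XX=0$, $E_1$ differs from the second equation of~\eqref{eq:toprk1} only by its sign. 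The generating-function argument in that proof shows that the family $E_N$, $N\ge 0$, is equivalent to~\eqref{eq:topint}. So it suffices to show that $E_N$ for $N\ge2$ follows from $E_0$ and $E_1$ (together with their polarisations).

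The key input is the Osserman property used in the proof of Corollary~\ref{c:rk1}. We have $R_XX=0$, and on $X^\perp$ the operator $R_X$ has only the eigenvalues $\|X\|^2$ and $4\|X\|^2$. Hence $R_X(R_X-\|X\|^2)(R_X-4\|X\|^2)=0$. Consequently every $R_X^N P$ is a combination
\begin{equation*}
a_N P + b_N \<X,P\> X + c_N R_XP ,
\end{equation*}
with coefficients polynomial in $\|X\|^2$, computed from the eigenvalues. I substitute this into the first term of $E_N$. The $P$-component gives a multiple of $E_0$. The $X$-component gives $\<X,P\>K_d(X^d,P^d)$, which also follows from $E_0$: apply the Euler operator in $P$ to $E_0$, or polarise $E_0$ in $P$ in the direction $X$.

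The second term of $E_N$ requires decomposing $R(X,R_X^kP)R_X^{m}P$ with $k+m=N-1$. I write $P=\lambda X+P_1+P_4$ according to the eigenspaces of $R_X$. Then $R_X^kP$ only rescales $P_1$ and $P_4$, and $R(X,R_X^kP)R_X^mP$ becomes a combination of the four vectors
\begin{equation*}
R(X,P_i)P_j, \qquad i,j\in\{1,4\},
\end{equation*}
with coefficients $\|X\|^{2(k+m)}\,i^k j^m$. The combinations appearing in $E_1$ and $E_0$ have the coefficient pattern $\sum_{i,j}R(X,P_i)P_j$ (for $N=1$). I therefore need identities, valid in rank one, showing that inside $K_d(X^d,\,\cdot\,,P^{d-1})$ the mixed terms $R(X,P_1)P_4$, $R(X,P_4)P_1$ and the diagonal terms are controlled by $E_0$ and $E_1$. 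For this I would use the explicit curvature tensors: those given for $\bc P^m$ and $\bh P^m$ in the proof of Corollary~\ref{c:rk1}, namely $P_4$ lying in $\Span(J_\a X)$, and the analogous octonionic description for $\bo P^2$. With them, $R(X,P_i)P_j$ can be expressed through $X$, $P$ and $R_XP$ up to vectors that are killed after pairing. I expect this to be the main obstacle. If a uniform argument fails, the fallback is to verify it case by case for $\bc P^m$, $\bh P^m$ and $\bo P^2$. The alternative I would try first is to polarise $E_1$, replacing $P$ by $P+\ve R_XP$. This generates the needed mixed terms directly, and may avoid the case analysis.

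Finally, for the Poisson-bracket reformulation, note the following. The operator $\{\tfrac12\|P\|^2,\cdot\}$ replaces one $X$ by $P$, so $\{\|P\|^2,K_d(X^d,P^d)\}$ is a nonzero multiple of $K_d(X^{d-1},P^{d+1})$. A direct computation of $\{R(P,X,X,P),K_d(X^d,P^d)\}$, using the formulas~\eqref{eq:dH} truncated to the quadratic term, gives
\begin{equation*}
2d\,K_d(X^{d-1},R_XP,P^d)-2d\,K_d(X^d,P^{d-1},R(X,P)P).
\end{equation*}
This is precisely the difference of the two sides of the second equation in~\eqref{eq:toprk1}, up to the factor $2d$, so the two formulations of the corollary are equivalent.
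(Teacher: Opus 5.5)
\textbf{There is a genuine gap.} Your reduction to the family $E_N$ is fine, and so is your check of the Poisson-bracket reformulation (up to an overall sign). The problem is the decisive step: that $E_N$ for $N\ge 2$ follows from $E_0$ and $E_1$. You never carry it out. You leave it as the ``main obstacle'' with a case-by-case fallback, and this step is the entire content of the corollary.

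The part you do carry out is also incorrect, in two places.
\begin{itemize}
\item The $X$-component of the first term, $\<X,P\>K_d(X^d,P^d)=\<X,P\>\cK(X,P)$, does not vanish for $K\ne 0$. Polarising $E_0$ in the direction $X$ gives $K_d(X^d,P^d)+d\,K_d(X^{d-1},P,X,P^{d-1})=0$, not $K_d(X^d,P^d)=0$.
\item The second term is not a combination of the vectors $R(X,P_i)P_j$ alone. In the summand with $R_X^0P=P$ in the last slot, the component $\lambda X$ of $P$ contributes $\lambda R(X,R_X^{N-1}P)X=-\lambda R_X^NP$. Moreover, the diagonal vectors $R(X,P_i)P_i$ have nonzero $X$-components. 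These produce terms $K_d(X^{d+1},P^{d-1})$ (with $X$ in the first $d+1$ slots), which are neither $E_0$ nor $E_1$.
\end{itemize}

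\textbf{The missing idea} is to keep the Poisson-bracket structure instead of decomposing the two parts of $E_N$ separately.
\begin{itemize}
\item The sum in $E_N$ is half the $X$-gradient of $\<R_X^NP,P\>$ (cf.~\eqref{eq:dH}). Hence $2d\,E_N=\{\<R_X^NP,P\>,K_d(X^d,P^d)\}$; this is just~\eqref{eq:bracketHKP} for a one-term expansion.
\item Apply your Osserman decomposition to this scalar function: $\<R_X^NP,P\>=a_N\|P\|^2+b_N\<X,P\>^2+c_N\<R_XP,P\>$, with your coefficients $a_N,b_N,c_N$ polynomial in $\|X\|^2$.
\item By the Leibniz rule, $E_N$ is then a combination of $\{\|P\|^2,K_d\}$, $\{\<X,P\>,K_d\}$, $\{\|X\|^2,K_d\}$ and $E_1$.
\item The bracket with $\<X,P\>$ vanishes because $K_d$ has equal degrees in $X$ and $P$.
\item The bracket with $\|X\|^2$ equals $2d\,K_d(X^{d+1},P^{d-1})$. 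Its vanishing is the non-obvious consequence of $E_0$ that your proposal lacks. $E_0$ says that the symmetrisation of $K_d$ over $d+1$ arguments vanishes. This forces $K_d(P^d,X^d)=(-1)^dK_d(X^d,P^d)$, since only the Young module $(d,d)$ survives. Swapping $X$ and $P$ in $E_0$ then gives $K_d(X^{d+1},P^{d-1})=0$.
\end{itemize}

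For example, $\<R_X^2P,P\>=5\|X\|^2\<R_XP,P\>-4\|X\|^4\|P\|^2+4\|X\|^2\<X,P\>^2$ yields $E_2=5\|X\|^2E_1$ modulo $E_0$ and this identity. The term $-4\|X\|^2\<X,P\>K_d(X^d,P^d)$ from the first part of $E_2$ cancels against a contribution of the second part, not via $E_0$.

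The paper handles all $N$ at once with the closed form~\eqref{eq:Hrk1}. Given $E_0$, every term of $\cH$ except $\frac{1}{6\cos^2\|X\|}R(P,X,X,P)$ Poisson-commutes with $K_d$. So no curvature identities specific to $\bc P^m$, $\bh P^m$ or $\bo P^2$ are needed.
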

\begin{proof}
  The condition $K_d(X^{d+1},P^{d-1}) = 0$ is necessary for the top slot tensor field $K$ to be Killing: this is the first equation of~\eqref{eq:topint}. This condition is equivalent to the fact the symmetrisation of $K_d$ by its first $d+1$ arguments is zero and is equivalent to the fact that $\{\|P\|^2,K_d(X^d,P^d)\}=0$. It also implies that $K_d(P^d,X^d) = (-1)^d K_d(X^d,P^d)$. This follows either from an argument similar to that in the proof of Lemma~\ref{l:young} (the only surviving module of the permutation group $\cS_{2d}$ is given by the Young tableau $(d,d)$), or directly, by noticing that for $a=d,d-1, \dots, 1,0$, one has $K_d(X^a,P^d,X^{d-a}) = - \frac{a}{d-a+1} K_d(X^{a-1},P^d,X^{d-a+1})$. But then swapping $X$ and $P$ we get $\{\|X\|^2,K_d(X^d,P^d)\}=0$, and hence $\{f(\|X\|^2),K_d(X^d,P^d)\}=0$, for any differentiable function $f(t)$. Using the fact that $\{\<X,P\>,K_d(X^d,P^d)\}=0$, as $K_d$ has the same degree in $X$ and in $P$, we obtain from~\eqref{eq:Hrk1} that the condition $\{\cH(X,P),K_d(X^d,P^d)\}=0$ is equivalent to $\{R(P,X,X,P),K_d(X^d,P^d)\}=0$ (which is equivalent to the second equation of~\eqref{eq:toprk1}), as required.
\end{proof}

\smallskip

We now restrict our attention to \emph{quadratic} Killing tensor fields and give the proof of Theorem~\ref{t:quadratic}. For quadratic fields, we can prove that top slot Killing tensor fields span the whole space of Killing tensor fields. It would be extremely interesting to know if this still holds for Killing tensor fields of higher rank. If this were true, the space of Killing tensor fields of any rank would be described by our equations~\eqref{eq:topint} in Proposition~\ref{p:top}.

\begin{proof}[Proof of Theorem~\ref{t:quadratic}]
  Assertion~\eqref{it:quadratictop1} is just the specification of Proposition~\ref{p:top}\eqref{it:top1} to the case $d=2$.

  Assertion~\eqref{it:quadraticeqs} also easily follows from Proposition~\ref{p:top}\eqref{it:topint} when $d=2$. Equation~\eqref{eq:topint} with $s=0$ gives $K_2(X,P,P,P) = 0$ which is equivalent to~\eqref{eq:quadraticsigma} by the symmetries of $K_2$. Equation~\eqref{eq:topint} with $s=1$ gives $2 K_2(X,P,P, R(P,X)X) + K_2(X,X,P,R(X,P)P) = 0$ which is equivalent to~\eqref{eq:quadratic21} modulo~\eqref{eq:quadraticsigma}. And finally, from equation~\eqref{eq:topint} with $s=2$ one obtains $K_2(X,P,R(P,X)X,R(P,X)X)+K_2(X,X,R(P,X)X,R(X,P)P)=0$. Replacing $X$ by $X+tP$ and taking the coefficient of $t^1$ in the resulting equation we obtain~\eqref{eq:quadratic22}. Conversely, replacing $P$ by $P+tX$ in~\eqref{eq:quadratic22} and taking the coefficient of $t^1$ we get back the former equation.

  For assertion~\eqref{it:quadraticspan}, denoting $\sW^2(M)=\Span(\cup_{x \in M} \sL^2_x) \subset \sK^2(M)$, we aim to prove that in fact, $\sW^2(M)=\sK^2(M)$.

  We claim that it suffices to prove that any element even at $o$ (the space of such elements is denoted $\sK^{0,2}_o(M)$ in the notation of Section~\ref{s:kts}) belongs to $\sW^2(M)$. Indeed, if this fact has already been established, we can further argue as follows. Let $G'$ be the subgroup of the full isometry group of $M$ generated by the reflections $r_x, \, x \in M$. Then the group $G$ either coincides with $G'$ (this happens when the ranks of $G$ and of $K$ are equal~\cite[Theorem~8.6.7]{WJ}) or is a subgroup of index $2$ in $G'$ (then $G'$ is disconnected). In any case, the group $G'$ is compact, and the space $\sK^2(M)$ is a finite dimensional $G'$-module. Note that $\sW^2(M)$ is also a $G'$-module, as it is generated (as a $G$-module) by the subspace $\sL^2_o$ which is invariant under the reflection at $o$. Take a nonzero element $K$ in an irreducible $G'$-submodule $\sK' \subset \sK^2(M)$. Acting by an element of $G$ if necessary we can assume that $K(o) \ne 0$. Then the element $K'=K + r_o^* K \in \sK'$ is still nonzero and is even at $o$. Then it belongs to $\sW^2(M)$. As $\sK'$ is irreducible, it is generated by any of its nonzero elements, in particular, by $K'$, and so $\sK' \subset \sW^2(M)$.

  It remains to show that the space $\sK^{0,2}_o(M)$ of quadratic Killing tensor fields that are even at $o$ lies in $\sW^2(M)$. Subtracting from any element of $\sK^{0,2}_o(M)$ a quadratic form in the Killing vector fields $V^1_v,\, v \in \m$, given by~\eqref{eq:vodd} which are odd at $o$ we obtain an element of $\sL^2_o$ which lies in $\sW^2(M)$ by definition, and so it is sufficient to prove that $V^1_u \odot V^1_v \in \sW^2(M)$, for any $u, v \in \m$.

  Any element $V^0_A \odot V^0_A,\, A \in \h$, where $V^0_A$ is given by~\eqref{eq:veven} is already top slot at $o$. Moreover, we have $\{V^0_A, V^1_v\}=V^1_{[A,v]}$. As $\sW^2(M)$ is a $G$-module, it contains the span of all the elements $\{\{V^0_A \odot V^0_A, V^1_v\}, V^1_v\} = 2 V^1_{[A,v]} \odot V^1_{[A,v]} + 2 V^0_{A} \odot V^0_{[[A,v],v]}$. But $[[A,v],v] \in \h$, and so $V^0_{A} \odot V^0_{[[A,v],v]}$ is top slot at $o$. Hence $V^1_{[A,v]} \odot V^1_{[A,v]} \in \sW^2(M)$. We note that the image of the map $\Phi: \h \times \m \to \m$ given by $\Phi(A,v) \mapsto [A,v]$, has non-empty interior. Indeed, at a point $(A,v)$ where $\Phi^*$ has the maximal rank (the set of such points is open), the tangent space to the image of $\Phi$ equals $[\h,v] + [A,\m]$, and so the normal space is the set of elements $w \in \h$ such that $\<[\h,v], w\> = \<[A,\m],w\> = 0$, which implies $[v,w] = [A,w] = 0$. Taking $v$ to be regular (the set of regular elements of $\m$ is open), we obtain that $w$ lies in the Cartan subspace $\t \subset \m$ determined by $v$. If $w$ is nonzero, we choose a root $\alpha$ such that $\alpha(w) \ne 0$ and take a nonzero element $A$ in the corresponding root subspace $\h_\a \subset \m$. Then $[A,w] \ne 0$, and so $\Phi^*$ is locally onto at $(A,v)$. This property is clearly open, and so the image of $\Phi$ has a nonempty interior. In particular, there exists an open set of regular elements $u \in \m$ belonging to that image. For such elements, we have $V^1_u \odot V^1_u \in \sW^2(M)$. Now, for any $v,w \in \m$, we have $\{\{V^1_u \odot V^1_u, V^1_v\}, V^1_w\} \in \sW^2(M)$, and so $V^1_u \odot V^1_{[[u,v],w]} \in \sW^2(M)$. Let $L_u = \Span([[u,v],w] \, | \, v, w \in \m) \subset \m$. If $x \in L_u^\perp$ is nonzero, then $x \perp [\h,u]$, and so $[u,x] = 0$, and so $x$ lies in the Cartan subspace of $\m$ determined by $u$. But then we have $[[v,u],x] = 0$, for all $v \in \m$, which leads to a contradiction, if we take $u$ belonging to a root subspace $\m_\alpha$ such that $\alpha(x) \ne 0$. Hence $L_u  = \m$. This implies that $V^1_v \odot V^1_u \in \sW^2(M)$, for any $v \in \m$ and any $u$ from an open subset of $\m$, and hence, for any $u \in \m$, as required.
\end{proof}

Theorem~\ref{t:quadratic} effectively reduces the problem of finding quadratic Killing tensor fields on a given symmetric space to solving a system of linear equations~(\ref{eq:quadraticsigma}, \ref{eq:quadratic21}, \ref{eq:quadratic22}). We make two observations of computational nature.
\begin{remark} \label{rem:Bianchi}
  First, it is easy to see that an element $K \in \sL^2$ is decomposable if and only if it is decomposable relative to $H$, that is, if it is comes from a quadratic form on the Lie algebra $\h$ under the projection $\Sym^2(\h) \ni A \odot B \mapsto V^0_A \odot V^0_B$, where $V^0_A$ is given in~\eqref{eq:veven}.

  Second, we note that equation~\eqref{eq:quadraticsigma} implies that the cyclic sum of $K_2$ by any three arguments is zero, and that $K_2(P,X,Y,Z)=K_2(Y,Z,P,X)$, for all $X, Y, Z, P \in \m$ (see the proof of Corollary~\ref{c:rk1top} for a more general fact). Furthermore, the $\SL(n)$-module of tensors of type $(0,4)$ having such symmetries is isomorphic to the $\SL(n)$-module of algebraic curvature tensors of type $(0,4)$; it has the same Young tableau \raisebox{-1pt}{\scalebox{1.5}{$\boxplus$}} and the same dimension $\frac{1}{12}n^2(n^2-1)$.
\end{remark}

\section{Proof of Theorem~\ref{t:hpnop2}}
\label{s:hpnop2}

We start with quadratic Killing tensor fields on the quaternionic projective space $\bh P^m=\Sp(m+1)/(\Sp(m)\Sp(1))$. We equip the space $\br^{4m+4}$ with the quaternionic structure defined by three anticommuting Hermitian structures $J_1, J_2$ and $J_3=J_1J_2$. The denote $\spg(m+1)$ and $V_{m+1}$ the space of skew-symmetric and of symmetric matrices commuting with $\Span(J_1,J_2,J_3)$, respectively, and denote $W_{m+1}$ the subspace of matrices of trace zero in $V_{m+1}$. We construct the following two subspaces of constant tensors $T$ of type $(0,4)$ on $\br^{4m+4}$ which are symmetric by the first pair of arguments and by the second pair of arguments. For an element $A \odot B \in \Sym^2(\spg(m+1))$ we define
\begin{equation}\label{eq:hpmT1}
\begin{gathered}
T^1_{A,B} (X,X,P,P) = \<AX,P\>\<BX,P\>, \quad \text{for } X, P \in \br^{4m+4}, \\
\sT_m^1 = \Span(T^1_{A,B} \, | \, A, B \in \spg(m+1)).
\end{gathered}
\end{equation}
For an element $S \odot Q \in \Sym^2(V_{m+1})$ we define
\begin{equation}\label{eq:hpmT2}
\begin{gathered}
T^2_{S,Q}(X,X,P,P) = \sum_{\a=1}^{3} \<SJ_\a X, P\>\<QJ_\a X, P\>, \quad \text{for } X, P \in \br^{4m+4}, \\
\sT_m^2 = \Span(T^2_{S,Q} \, | \, S, Q \in V_{m+1}).
\end{gathered}
\end{equation}
Put $\sT_m = \sT_m^1 + \sT_m^2$ (note that the map from $\Sym^2(\spg(m+1)) \oplus \Sym^2(V_{m+1})$ to $\sT_m$ always has a nontrivial kernel and that $\sT_m^1 \cap \sT_m^2 \ne 0$, but this will not affect our arguments).

The following properties are easy to check.
  \begin{enumerate}[label=(\Roman*),ref=\Roman*]
  \item \label{it:hpmTgeod}
  Any $T \in \sT_m$ satisfies $T(X,X,X,P)=0$, and so on every geodesic $\gamma(s) = \cos s X + \sin s Y$ of the unit sphere $S^{4n+3}$, where $X, Y \in \br^{4n+4}$ are orthonormal, the function $T(\gamma(s), \gamma(s), \dot{\gamma}(s), \dot{\gamma}(s))$ is constant.

  \item \label{it:hpmTinv}
  Any $T \in \sT_m$ is $\Sp(1)$-invariant, that is, $T(e^J X,e^J X,e^J P,e^J P) = T(X,X,P,P)$, for any $J \in \Span(J_1,J_2,J_3)$.
\end{enumerate}

Let $\pi: S^{4m+3} \to \bh P^m$ be the Riemannian submersion along the fibers of the Hopf fibration defined by the orbits of the action of $\Sp(1)$ on $S^{4m+3}$.

\begin{lemma} \label{l:hpmprep}
  Let $T$ be a constant tensor of type $(0,4)$ on $\br^{4m+4}$ which is symmetric by the first pair of arguments and by the second pair of arguments and which satisfies properties~\eqref{it:hpmTgeod} and~\eqref{it:hpmTinv} (but is not necessarily an element of $\sT_m$). The following hold.
  \begin{enumerate}[label=\emph{(\alph*)},ref=\alph*]
    \item \label{it:hpmprepFT}
    The quadratic tensor field $F_T$ on $\bh P^m$ defined by
    \begin{equation} \label{eq:FT}
        F_T(\pi(X),\pi(X),(d\pi)_X P,(d\pi)_X P)=T(X,X,P,P),
    \end{equation}
    for a $X \in S^{4m+3}$ and $P \in \br^{4m+4}$ is well-defined and Killing.

    \item \label{it:hpmprepkerF}
    The kernel of the map $T \mapsto F_T$ defined by~\eqref{eq:FT} is $\{T=T^2(S, I_{4m+4}) \, | \, S \in V_{m+1}) \subset \sT^2_m$, as given by~\eqref{eq:hpmT2}.

    \item \label{it:hpmprepp1p2p3}
    Suppose the polynomials $f_1, f_2$ and $f_3$ in $8m+8$ variables $X=(x_1, \dots, x_{4m+4})^t$ and $P=(p_1, \dots, p_{4m+4})^t$ satisfy $\sum_{\a=1}^{3} f_\a(X,P) \<J_\a X, P\> = 0$. Then $f_\a(X,P) = h_\b(X,P)\<J_\gamma X,P\> - h_\gamma (X,P)\<J_\b X,P\>$, where $(\a,\b,\gamma)$ is a cyclic permutation of $(1,2,3)$, for some polynomials $h_1, h_2, h_3$.
  \end{enumerate}
\end{lemma}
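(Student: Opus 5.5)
For part (a) I would first show that $F_T$ is well defined and then show that it is Killing, using the geodesic property~\eqref{it:hpmTgeod}. A point of $\bh P^m$ lifts to an $\Sp(1)$-orbit in $S^{4m+3}$. A tangent vector at $\pi(X)$ lifts to $P + V$, where $P$ is horizontal, that is, $P \perp X, J_1X, J_2X, J_3X$, and $V$ is vertical, that is, $V \in \Span(J_\a X)$. Property~\eqref{it:hpmTinv} makes the right-hand side of~\eqref{eq:FT} independent of the choice of point in the fibre, provided the horizontal lift is transported by the same element $e^J$. Independence of the vertical component amounts to
\begin{equation*}
T(X,X,J_\a X,P)=0 \quad\text{and}\quad T(X,X,J_\a X,J_\b X)=0 .
\end{equation*}
To get these I would differentiate~\eqref{it:hpmTinv} at $J=0$, which gives $T(J X,X,P,P)+T(X,X,J P,P)=0$. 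I would combine this with the consequences of~\eqref{it:hpmTgeod} recorded in Remark~\ref{rem:Bianchi}: pair symmetry $T(P,X,Y,Z)=T(Y,Z,P,X)$ and vanishing cyclic sums. I would then polarise in $P$ and substitute $P=J_\b X$. This is a short symmetry computation. To prove that $F_T$ is Killing, take a unit-speed geodesic of $\bh P^m$. Its horizontal lift is a great circle $\gamma(s)=\cos s\, X+\sin s\, Y$ with $Y$ horizontal, and it stays horizontal. Along it, $F_T(\dot{\bar\gamma},\dot{\bar\gamma})$ equals $T(\gamma,\gamma,\dot\gamma,\dot\gamma)$, which is constant by~\eqref{it:hpmTgeod}. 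So the quadratic function defined by $F_T$ is a first integral of the geodesic flow, and by the characterisation in the Introduction $F_T$ is Killing.

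For part~\eqref{it:hpmprepp1p2p3} I would use commutative algebra. Put $g_\a=\<J_\a X,P\>$. The claim is that the Koszul complex of $(g_1,g_2,g_3)$ is exact in degree one, that is, the syzygy module is generated by the Koszul relations. This holds as soon as $g_1,g_2,g_3$ form a regular sequence in $\bc[X,P]$. Since a polynomial ring is Cohen--Macaulay and the $g_\a$ are homogeneous, it is enough to show that their common complex zero locus has codimension $3$. For any fixed $X$ with $\<X,X\>\ne0$, the three vectors $J_\a X$ are linearly independent. So the fibre over such $X$ is a codimension-$3$ linear subspace in $P$. The remaining $X$, those with $\<X,X\>=0$, form a proper subvariety, and I would bound the dimension of the zero locus over it separately to confirm it does not exceed codimension $3$. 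This dimension count over the degenerate $X$ is the only delicate point in~(c).

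Part~\eqref{it:hpmprepkerF} is where I expect the main obstacle. First, the maps $T^2_{S,I_{4m+4}}$ with $S\in V_{m+1}$ lie in the kernel. This is direct, since $\<SJ_\a X,P\>\<J_\a X,P\>$ vanishes whenever $P$ is horizontal. For the converse, suppose $F_T=0$. Then $T(X,X,P,P)$ vanishes whenever $g_1=g_2=g_3=0$; by homogeneity in $X$ this holds off the sphere as well. Reducedness of the complete intersection, which holds because it is generically smooth, together with Zariski density of its real points, puts $T(X,X,P,P)$ in the ideal $(g_1,g_2,g_3)$. Comparing bidegrees, I can write $T=\sum_\a g_\a\,\ell_\a$ with each $\ell_\a$ bilinear in $(X,P)$. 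I would then use~(c) to control the non-uniqueness of the $\ell_\a$, namely $\ell_\a\mapsto \ell_\a + c_\b g_\gamma - c_\gamma g_\b$ with constants $c_\b$. Next I would impose the symmetry of $T$ in $P$, invariance~\eqref{it:hpmTinv}, and~\eqref{it:hpmTgeod} on this representation. The aim is a normalisation $\ell_\a(X,P)=\<SJ_\a X,P\>$ for a single symmetric $S$ commuting with the $J_\a$. Extracting $S$ is the step I would attack first by decomposing the $\ell_\a$ under $\Sp(1)$. I would try to show that the $\Sp(1)$-equivariance forces $\ell_\a(X,P)=\<B_\a X,P\>$ with $B_\a=SJ_\a$ for a common $S\in V_{m+1}$, after subtracting Koszul terms.
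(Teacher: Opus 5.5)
\textbf{Part (a).} One step fails. The identities $T(X,X,J_\a X,P)=0$ and $T(X,X,J_\a X,J_\b X)=0$ do not follow from (I) and (II), and they are false in general. For example, $T(X,X,P,P)=\sum_\a\<J_\a X,P\>^2$ (that is, $T^2_{S,Q}$ with $S=Q=I_{4m+4}$) satisfies (I) and (II), but $T(X,X,J_1X,J_1X)=\|X\|^4$.

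There is also a structural reason. Suppose $F_T$ had to reproduce $T(X,X,Y,Y)$ for every $Y\in T_XS^{4m+3}$. Then $F_T=0$ together with (I) would give $T(X,X,Y,Y)=0$ for all $Y$, hence $T=0$, and the kernel in (b) would be trivial. So no symmetry computation can produce your identities. $F_T$ has to be defined through horizontal lifts only, where $(d\pi)_X$ is an isomorphism onto $T_{\pi(X)}\bh P^m$. This is how the paper uses the defining formula in its proof of (b). With that definition, well-definedness is exactly (II), and your great-circle argument for the Killing property is the paper's.

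\textbf{Part (b).} This part is not actually proved. Your preparation is right, and more careful than the paper about radicality and real points. But the decisive step, producing a single $S\in V_{m+1}$, is only announced. After writing $T=\sum_\a\<N_\a X,P\>\<J_\a X,P\>$, condition (I) gives $N_\a\in\so(4m+4)$. Condition (II), however, only constrains $(N_\a)$ up to the ambiguity $N_\a\mapsto N_\a+\sum_\b Q_{\a\b}J_\b$ with $Q\in\so(3)$, and removing that ambiguity is the whole content of (b).

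The paper does it as follows:
\begin{enumerate}
\item normalise $Q$ so that $(\Tr(N_\a J_\b))$ is symmetric;
\item differentiate (II) along $J_1$ and apply (c), obtaining $[N_1,J_1]=\mu J_2-\nu J_3$, $[N_2,J_1]+2N_3=-\mu J_1+\eta J_3$ and $[N_3,J_1]-2N_2=\nu J_1-\eta J_2$;
\item kill $\mu,\nu,\eta$ by taking traces against the $J_\b$;
\item deduce $[N_\a,J_\a]=0$ and $[N_\a,J_\b]=2\ve_{\a\b}N_\gamma$, hence $N_\a=SJ_\a$ with $S$ commuting with the $J_\a$, and $S$ symmetric because $N_\a$ is skew.
\end{enumerate}

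Your $\Sp(1)$ idea can also be made to work, and it avoids the trace bookkeeping. Let $\Sp(1)$ act on triples by conjugating each $N_\a$ and rotating the index by the matrix of $\operatorname{Ad}$ on $\Span(J_1,J_2,J_3)$. Then $(N_\a)\mapsto\sum_\a\<N_\a X,P\>\<J_\a X,P\>$ is equivariant, so averaging a representative of the invariant $T$ over $\Sp(1)$ gives another representative. That representative is an equivariant map $\Span(J_1,J_2,J_3)\to\End(\br^{4m+4})\cong\bh\otimes C$. Here $C$ is the commutant of the $J_\a$ (acted on trivially), and $\Sp(1)$ acts on $\bh=\br\oplus\operatorname{Im}\bh$ by conjugation. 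Schur's lemma then gives $N_\a=J_\a S$ with $S\in C$, and skewness gives $S^t=S$. This route does not even need (c). Either way, some such argument has to be written out.

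\textbf{Part (c).} Your route differs from the paper's, which proves primality of $(\<J_2X,P\>,\<J_3X,P\>)$ and then argues by hand with divisibilities; yours is fine. The dimension count you flag works for $m\ge1$. Over $\bc$, write $\bc^{4m+4}=\bc^2\otimes\bc^{2m+2}$ with $\Span_\bc(J_1,J_2,J_3)$ acting as $\mathfrak{sl}_2$ on the first factor. The vectors $J_\a X$ are dependent only for decomposable $X=u\otimes w$, a set of dimension $2m+3$. For such $X$ their span $\bc^2\otimes w$ is $2$-dimensional, so that part of the zero locus has dimension $6m+5<8m+5$. The same count also gives the irreducibility and generic smoothness you need at the start of (b).
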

From assertion~\eqref{it:hpmprepFT} it follows that the quadratic tensor fields $F_T$ are Killing for $T \in \sT_m$. Note that for $T \in \sT_m^1$, the corresponding tensor fields $F_T$ are decomposable.
\begin{proof}
  For a tensor $T$ as in the assumption of the lemma, property~\eqref{it:hpmTinv} implies that the quadratic tensor field $F_T$ on $\bh P^m$ is well-defined. Moreover, from property~\eqref{it:hpmTgeod}, the tensor field $F_T$ is constant along geodesics of $\bh P^m$, as the latter are the projections of the horizontal geodesics on $S^{4n+3}$, and hence $F_T$ is a quadratic Killing tensor on $\bh P^m$, as claimed by assertion~\eqref{it:hpmprepFT}.

  We next establish assertion~\eqref{it:hpmprepp1p2p3}. It is easy to see that it suffices to prove it assuming $X, P \in \bc^{4m+4}$. Let $\cI$ be the ideal in $\bc[X,P]$ generated by polynomials $\<J_2X,P\>$ and $\<J_3X,P\>$. A Zariski open subset of the variety $V(\cI)=\{(X,P) \in \bc^{8m+8} \, | \, \<J_2X,P\> = \<J_3X,P\> = 0\}$ can be given by a rational parametrisation (we can express $x_1$ and $x_2$ as rational functions of $x_3, \dots, x_{4m+4},p_1, \dots, p_{4m+4}$), and so by \cite[Proposition~4.5.6]{CLO}, $V(\cI)$ is irreducible. Hence the ideal $\cI$ is prime. We have $f_1(X,P) \<J_1 X, P\> = 0$ for $(X,P) \in V(\cI)$, and as $\<J_1 X, P\>$ is not zero on $V(\cI)$, we obtain that $f_1(X,P) = 0$ on $V(\cI)$. As $\cI$ is prime we obtain $f_1(X,P) = r_{12}(X,P)\<J_2X,P\> + r_{13}(X,P)\<J_3X,P\>$, for some $r_{12}, r_{13} \in \bc[X,P]$. By a similar argument, we get another two equations obtained by a cyclic permutations of the subscripts $(1,2,3)$. From the assumption we get $\sum_{\a < \b} (r_{\a\b}(X,P)+r_{\b\a}(X,P))\<J_\a X,P\>\<J_\b X,P\> = 0$. Then for $\{\a,\b,\gamma\} = \{1,2,3\}$, we obtain that $r_{\a\b}(X,P)+r_{\b\a}(X,P)$ is divisible by $\<J_\gamma X, P\>$, and moreover, $r_{\a\b}(X,P)+r_{\b\a}(X,P)= q_\gamma(X,P) \<J_\gamma X,P\>$, for some $q_\gamma \in \bc[X,P]$ with $\sum_{\gamma=1}^{3} q_\gamma = 0$. It follows that $(r_{\a\b}(X,P)+q_\a(X,P) \<J_\gamma X, P\>) + (r_{\b\a}(X,P)+q_\b(X,P) \<J_\gamma X, P\>) = 0$, for $\{\a,\b,\gamma\} = \{1,2,3\}$, and the claim of the assertion follows if we define $h_\gamma(X,P) = r_{\b\a}(X,P)+q_\b(X,P) \<J_\gamma X, P\> = -(r_{\a\b}(X,P)+q_\a(X,P) \<J_\gamma X, P\>)$, where $(\a,\b,\gamma)$ is a cyclic permutation of $(1,2,3)$.

  For assertion~\eqref{it:hpmprepkerF} we suppose that $F_T = 0$ for some $T$ satisfying the assumptions of the lemma. By~\eqref{eq:FT} this means that $T(X,X,P,P)=0$ when $X \in S^{4m+3}$ and $P$ is horizontal, and hence $T(X,X,P,P)=0$ for all $X,P \in \br^{4m+4}$ such that $\<J_1X, P\> = \<J_2X, P\> = \<J_3X, P\> = 0$. An argument similar to that in the previous paragraph shows that in $\bc^{8m+8}$, the variety defined by these three equations is irreducible, and so $T(X,X,P,P)= \sum_{\a=1}^{3} f_\a(X,P) \<J_\a X,P\>$, for some polynomials $f_\a \in \bc[X,P]$. As $T$ and $J_\a$ have real coefficients, we can choose $f_\a$ to be real for $X,P \in \br^{4m+4}$. Furthermore, comparing the degrees, we can choose them bilinear, so that $T(X,X,P,P)= \sum_{\a=1}^{3} \<N_\a X,P\> \<J_\a X,P\>$, for some matrices $N_\a$. From property~\eqref{it:hpmTgeod} we obtain $\sum_{\a=1}^{3} \<N_\a X,X\> \<J_\a X,P\> = 0$, from which $N_\a \in \so(4m+4)$. Moreover, in the expression for $T$, we can replace each $N_\a$ by $N_\a + \sum_{\b=1}^{3} Q_{\a\b} J_\b$, where $Q \in \so(3)$, and so by a choice of $Q$ we can assume that the matrix $c_{\a\b}=\Tr (N_\a J_\b)$ is symmetric. Now from property~\eqref{it:hpmTinv} we obtain $0=\frac{d}{dt}|_{s=0} T(e^{sJ_1} X,$ $e^{sJ_1} X,e^{sJ_1} P,e^{sJ_1} P)= \sum_{\a=1}^{3} (\<[N_\a,J_1] X,P\> \<J_\a X,P\> + \<N_\a X,P\> \<[J_\a,J_1] X,P\>)$, and so assertion~\eqref{it:hpmprepp1p2p3} implies that for some $\mu, \nu, \eta \in \br$ we have
  \begin{equation*}
    [N_1,J_1] = \mu J_2 - \nu J_3, \quad [N_2,J_1] + 2 N_3 = -\mu J_1 + \eta J_3, \quad [N_3,J_1] - 2N_2 = \nu J_1 - \eta J_2.
  \end{equation*}
  But now multiplying the first equation by $J_2$ and taking the trace we obtain $2c_{13}= -(4m+4) \mu$, while multiplying the second equation by $J_1$ and taking the trace gives $2c_{13}=(4m+4)\mu$. It follows that $c_{13} = \mu = 0$, and by a similar argument, $c_{12}=\nu = 0$. Moreover, multiplying the second equation by $J_3$ and taking the trace we get $2c_{33}-2c_{22}= -(4m+4) \eta$, while multiplying the third equation by $J_2$ and taking the trace gives $2c_{33}-2c_{22} = (4m+4)\eta$. We obtain $c_{22}=c_{33}$ and $\eta = 0$. By symmetry we obtain $[N_\a, J_\a] = 0$ and $[N_\a, J_\b] = 2 \ve_{\a\b} N_\gamma$, for $\{\a,\b,\gamma\} = \{1,2,3\}$, where $J_\a J_\b = \ve_{\a\b} J_\gamma$. In particular, from $[N_1,J_2] = 2N_3$ we get $2N_3J_3 = N_1J_1 - J_2N_1J_3$, and from $[N_1,J_3] = -2N_2$ it follows that $-2N_2J_2 = -N_1J_1 - J_3N_1J_2 = -N_1J_1 + J_2N_1J_3$, as $J_3N_1J_2 = -J_2J_1N_1J_2 = -J_2N_1J_1J_2 = -J_2N_1J_3$. Adding the two equations we obtain $N_3J_3 = N_2J_2$, and so by symmetry, there exists a matrix $S$ such that $N_\a = SJ_\a$, for $\a=1,2,3$. Now from $[N_\a,J_a] = 0$ it follows that $S$ commutes with all $J_\a$, and then $0 = N_\a + N_\a^t = SJ_\a - J_a S^t = J_\a (S-S^t)$. Hence $S$ is symmetric, and so $S \in V_{m+1}$. This gives $T=T^2(S,I_{4m+4})$, as required.
\end{proof}

We now prove by induction by $m$ that any quadratic Killing tensor field on $\bh P^m$ has the form $F_T$ given in~\eqref{eq:FT}, where $T \in \sT_m$. The space $\bh P^1$ is a sphere, and so any Killing tensor field on it is decomposable.

Suppose that $m > 1$ and that the claim has already been established for all spaces $\bh P^k$ with $k<m$. Choose a point $o \in \bh P^m$ and consider a top slot quadratic Killing tensor field at the point $o$. By Theorem~\ref{t:quadratic}, it is uniquely defined by a constant tensor $K_2$ of type $(0,4)$ on $\br^{4m} = T_o \bh P^m$ which is symmetric by the first two arguments, symmetric by the second two arguments, and satisfies equations~\eqref{eq:quadraticsigma} and~\eqref{eq:quadratic21}; the fact that we do not require equation~\eqref{eq:quadratic22} follows from Corollary~\ref{c:rk1top}. For $X, P \in T_o \bh P^m$, the curvature tensor of $\bh P^m$ is given by $R(X,P)P = \|P\|^2 X - \<X, P\>P + 3 \sum_{\a=1}^{3} \<J_\a P, X\> J_\a P$, and so using~\eqref{eq:quadraticsigma} we obtain from~\eqref{eq:quadratic21} that $\sum_{\a=1}^{3} (K_2(X,X,P,J_\a P)+K_2(X,J_\a X,P,P))\<J_\a X,P\> = 0$. By Lemma~\ref{l:hpmprep}\eqref{it:hpmprepp1p2p3}, there exist polynomials $f_1,f_2,f_3$ in $X,P$ such that $K_2(X,X,P,J_\a P)+K_2(X,J_\a X,P,P) = f_{\a+1}(X,P) \<J_{\a+2} X,P\> - f_{\a+2}(X,P) \<J_{\a+1} X,P\>$, where the subscripts are computed modulo $3$. Comparing the degrees, we see that there exist matrices $N_\a$, such that for $\a = 1,2,3$, we have
\begin{multline}\label{eq:hpmK2}
  K_2(X,X,P,J_\a P)+K_2(X,J_\a X,P,P) \\ = \<N_{\a+1}X,P\> \<J_{\a+2} X,P\> - \<N_{\a+2} X,P\> \<J_{\a+1} X,P\>,
\end{multline}
with the subscripts computed modulo $3$.

For a tensor $T$ of type $(0,4)$ on $\br^{4m}$ we define
\begin{equation} \label{eq:hpmJaT}
\begin{split}
   (J_\a.T)(X_1,X_2,X_3,X_4) & = \tfrac{d}{dt}|_{t=0} T(e^{tJ_\a} X_1, e^{tJ_\a} X_1, e^{tJ_\a} X_1, e^{tJ_\a} X_1) \\
     & = T(J_\a X_1,X_2,X_3,X_4) + T(X_1,J_\a X_2,X_3,X_4) \\
     &+ T(X_1,X_2,J_\a X_3,X_4) + T(X_1,X_2,X_3,J_\a X_4),
\end{split}
\end{equation}
and so~\eqref{eq:hpmK2} can be written as
\begin{equation}\label{eq:hpmJaK}
(J_\a.K_2)(X,X,P,P) = 2 (\<N_{\a+1}X,P\> \<J_{\a+2} X,P\> - \<N_{\a+2} X,P\> \<J_{\a+1} X,P\>).
\end{equation}
Since $(J_\a.K_2)(X,X,X,P) = 0$ by~\eqref{eq:hpmJaT} and~\eqref{eq:quadraticsigma}, we obtain from~\eqref{eq:hpmJaK} that $N_\a \in \so(4m)$. We also note that for any $3 \times 3$ symmetric matrix $(c_{\a\b})$, equations~\eqref{eq:hpmJaK} are satisfied with $N^{(1)}_\a = - \sum_{\b=1}^{3} c_{\a\b} J_\b$ and with $K_2 = K_2^{(1)}$ given by $K_2^{(1)}(X,X,P,P) = \sum_{\b,\gamma=1}^{3} c_{\b \gamma} \<J_\b X, P\> \<J_\gamma X, P\>$. So taking $c_{\a\b}=-\frac12 \Tr(N_\a J_b + N_\b J_\a)$ and replacing $N_\a$ by $N_\a - N^{(1)}_\a$ and $K_2$ by $K_2 - K_2^{(1)}$, we can assume that in equations~\eqref{eq:hpmJaK} we additionally have $\Tr(N_\a J_b + N_\b J_\a) = 0$.

Furthermore, from~\eqref{eq:hpmJaT} it is easy to see that $(J_1.J_2.T) - (J_2.J_1.T) = -2 J_3.T$, and so for $T=K_2$, equation~\eqref{eq:hpmJaK} gives $\<[N_3,J_1]X,P\>\<J_1 X,P\> + \<[N_3,J_2]X,P\>\<J_2 X,P\> - (\<[N_1,J_1]X,P\> + \<[N_2,J_2]X,P\>)\<J_3 X,P\> = 0$. Then from Lemma~\ref{l:hpmprep}\eqref{it:hpmprepp1p2p3} we obtain that for some $\eta, \tau, \sigma \in \br$ we have
\begin{equation*}
    [N_3,J_1] = \eta J_2 + \tau J_3, \quad [N_3,J_2] = -\eta J_1 + \sigma J_3, \quad [N_1,J_1] + [N_2,J_2] = \tau J_1 + \sigma J_2.
\end{equation*}
Multiplying each of these three equations by $J_\a, \, \a=1,2,3$, and taking the trace we obtain $\Tr(N_3 J_2) = \Tr(N_2 J_3) = 2m\tau, \, \Tr(N_3 J_1) = \Tr(N_1 J_3) = -2m\sigma, \, \Tr(N_3 J_3) = -2m\eta$ and $\Tr(N_1J_2-N_2J_1) = 0$. As from the previous paragraph we can assume that $\Tr(N_\a J_b + N_\b J_\a) = 0$, we deduce that $\eta=\tau=0$, and so the by the above equation and the ones obtained from it by permuting the subscripts $\{1,2,3\}$ we get $[N_\a,J_\b] = 0$, for all $\a,\b=1,2,3$. It follows that $N_1,N_2,N_3 \in \spg(m)$. Then equations~\eqref{eq:hpmJaK} are satisfied with $K_2 = K_2^{(2)}$ given by $K_2^{(2)}(X,X,P,P) = \sum_{\a=1}^{3} \<N_\a X, P\> \<J_\a X, P\>$. Replacing $K^2$ by $K_2 - K_2^{(2)}$, we can now assume that the equations~\eqref{eq:hpmJaK} have the form $(J_\a.K_2)(X,X,P,P) = 0$, for $\a=1,2,3$.

But then the definition~\eqref{eq:hpmJaT} implies that $K_2$ is $\Sp(1)$-invariant. So the tensor $K_2$ on $\br^{4m}$ has the properties~\eqref{it:hpmTgeod} and~\eqref{it:hpmTinv}, from which it follows that the quadratic tensor field $F_{K_2}$ on $\bh P^{m-1}$ defined by formula~\eqref{eq:FT} for the Hopf projection $\pi:S^{4m-1} \to \bh P^{m-1}$ is well-defined and Killing, by Lemma~\ref{l:hpmprep}\eqref{it:hpmprepFT}. By the induction hypothesis, $F_{K_2} = F_T$ for some $T \in \sT_{m-1}$, and so by Lemma~\ref{l:hpmprep}\eqref{it:hpmprepFT} we obtain $K_2 \in \sT_{m-1}$.

Thus for any tensor $K_2$ satisfying conditions of Theorem~\ref{t:quadratic}\eqref{it:quadraticeqs} we have $K_2 - K_2^{(1)} - K_2^{(2)} \in \sT_{m-1}$. We now show that there exists $T \in \sT_m$ for which the quadratic tensor field $F_T$ defined by~\eqref{eq:FT} is top slot at $o$ and such that the $(0,4)$ tensor on $T_o \bh P^m$ constructed as in Theorem~\ref{t:quadratic}\eqref{it:quadratictop1} from $F_T$ coincides with $K_2$.

Choose an orthonormal basis $\{e_i\}$ for $\br^{4m+4}$ in such a way that $o=\pi e_{4m+4}$ and that $J_\a e_{4m+4} = e_{4m+\a}$ for $\a=1,2,3$. At the point $e_{4m+4}$, the horizontal and the vertical subspaces of the Hopf fibration are defined, respectively, by $\mathscr{H}=\Span(e_1, \dots, e_{4m})$ and $\mathscr{V} = \mathscr{H}^\perp$. The subspace $\mathscr{H}$ can be identified with $T_o \bh P^m$, and the isotropy subalgebra at the point $o$ is given by $\spg(m) \oplus \spg(1) \subset \spg(m+1)$, where $\spg(m)$ consists of the matrices annihilating $\mathscr{V}$, and $\spg(1)$, of the matrices annihilating $\mathscr{H}$. Although the subalgebra $\spg(1)$ is abstractly isomorphic to $\Span(J_1,J_2,J_3)$, they have nothing in common (the restrictions of $\Span(J_1,J_2,J_3)$ to $\mathscr{V}$ and $\spg(1)$ are two complementary $\so(3)$ ideals in $\so(4)$). However, we can choose a basis $\{L_1, L_2, L_3\}$ for $\spg(1)$ in such a way that $L_\a e_{4m+4} = J_\a e_{4m+4}$ for $\a=1,2,3$. Relative to this basis, the action of an element $A' = A \oplus L_\a$ on $\mathscr{H}$, where $A \in \spg(m)$, is given by $A'(X) = AX - J_\a X$ for $X \in \mathscr{H}$. Now any element of $\sT^1_{m-1}$ is a linear combination of the tensors $T^1_{A,B}$ given by~\eqref{eq:hpmT1} for $A \odot B \in \Sym^2(\spg(m))$, and so defining the matrices $A'=A \oplus 0, B' = B \oplus 0 \in \spg(m) \oplus \spg(1) \subset \spg(m+1)$ we obtain that the tensor $T^1_{A',B'}$ defines a top slot quadratic Killing tensor field $F_{T^1_{A',B'}}$ on $\bh P^m$ whose corresponding $(0,4)$ tensor constructed as in Theorem~\ref{t:quadratic}\eqref{it:quadratictop1} coincides with $T^1_{A,B}$. Similar argument, with obvious modification, applies to the elements of $\sT^2(m-1)$. Now, for a tensor $K_2^{(1)}$ defined by $K_2^{(1)}(X,X,P,P) = \sum_{\b,\gamma=1}^{3} c_{\b \gamma} \<J_\b X, P\> \<J_\gamma X, P\>$, with $c_{\b \gamma} = c_{\gamma \b}$, we take the element $T =\sum_{\b,\gamma=1}^{3} c_{\b \gamma} T^1(0\oplus L_\b,0\oplus L_\gamma) \in \sT^1_m$. Then $F_T$ is a top slot quadratic Killing tensor field at $o$ whose corresponding tensor $\cK$ coincides with $K_2^{(1)}$. Finally, for the tensor $K_2^{(2)}$ defined by $K_2^{(2)}(X,X,P,P) = \sum_{\a=1}^{3} \<N_\a X, P\> \<J_\a X, P\>$, where $N_\a \in \spg(m)$, we take $T = - \sum_{\a=1}^{3} T^1(N_\a \oplus 0, 0 \oplus L_\a) \in \sT_m^1$.

Thus any top slot quadratic Killing tensor field on $\bh P^m$ equals $F_T$ given by~\eqref{eq:FT}, for some $T \in \sT_m$. By Theorem~\ref{t:quadratic}\eqref{it:quadraticspan}, the same is true for any quadratic Killing tensor field. This establishes assertion~\eqref{it:hpnop2hpn} of Theorem~\ref{t:hpnop2}.

\smallskip

For assertion~\eqref{it:hpnop2hp2} of Theorem~\ref{t:hpnop2} we additionally need to show that any quadratic tensor field $F_T$ on $\bh P^2$ with $T \in \sT_2^2$ lies in the span of the quadratic tensor fields $F_T$ with $T \in \sT_2^1$ (this is no longer true for $m \ge 2$, by~\cite[Theorem~1]{MN1}). This is equivalent to saying that $\sT_2^2 \subset \sT_2^1 + \Ker F$. We identify $\br^{12}$ with the quaternionic module $\bh^3$, and for an element $q \in \bh$, define $\rL(q), \rR(q): \bh \to \bh$ to be the operators of the left and the right multiplication by $q$, respectively. Choose the coordinates in such a way that $J_\a = \diag(\rL(\ir_\a), \rL(\ir_\a), \rL(\ir_\a))$, where $\ir_1, \ir_2$ and $\ir_3$ are the standard imaginary quaternions. Then the elements of $\spg(3)$ are the block matrices, with the $4 \times 4$ blocks $\rR(q_{ij}), \, i,j =1,2,3$, where $q_{ij} \in \bh$ and $q_{ji}= -\overline{q_{ij}}$. The elements of $V_3$ are also the block matrices, with the $4 \times 4$ blocks $\rR(q_{ij}), \, i,j =1,2,3$, with $q_{ij} \in \bh$ and $q_{ji}= \overline{q_{ij}}$ (note that $q_{ii} \in \br$). Up to conjugation by an element of $\Sp(3)$, we can assume that $S= \diag (a_1 I_4, a_2 I_4, a_3 I_4)$, where $a_i \in \br$. Let $\mu_1, \mu_2, \mu_3 \in \br$ be such that $\mu_1 + \mu_2 = a_1a_2, \, \mu_2 + \mu_3 = a_2a_3$ and $\mu_3 + \mu_1 = a_3a_1$. Then for $X=(x^{(1)}, x^{(2)}, x^{(3)})^t$ and $P=(p^{(1)}, p^{(2)}, p^{(3)})^t$, where $x^{(i)}, p^{(i)} \in \br^4 = \bh$ we obtain  from~\eqref{eq:hpmT2}:
  \begin{align*}
    T^2_{S,S}(X,P) & = \sum\nolimits_{\a=1}^{3} \Big(\sum\nolimits_{i=1}^{3} a_i \<\rL_\a x^{(i)}, p^{(i)}\>\Big)^2 \\
     & = \sum\nolimits_{i \ne j, \a} a_i a_j \<\rL_\a x^{(i)}, p^{(i)}\>\<\rL_\a x^{(j)}, p^{(j)}\> + \sum\nolimits_{\a, i} a_i^2 \<\rL_\a x^{(i)}, p^{(i)}\>^2 \\
     & = \sum\nolimits_{i \ne j, \a} (\mu_i + \mu_j) \<\rL_\a x^{(i)}, p^{(i)}\>\<\rL_\a x^{(j)}, p^{(j)}\> + \sum\nolimits_{\a, i=1}^3 a_i^2 \<\rL_\a x^{(i)}, p^{(i)}\>^2 \\
     & = \sum\nolimits_{\a=1}^3 \Big(\sum\nolimits_{i=1}^3 \mu_i \<\rL_\a x^{(i)}, p^{(i)}\>\Big)^2 + \sum\nolimits_{\a, i=1}^3 (a_i^2-2\mu_i) \<\rR_\a x^{(i)}, p^{(i)}\>^2 \\
     & = T^2_{I_{12},S'}(X,P) + \sum\nolimits_{\a,i=1}^3 (a_i^2-2\mu_i) T^1(A_{i\a}, A_{i\a})(X,P),
  \end{align*}
where $S' = \diag (\mu_1 I_4, \mu_2 I_4, \mu_3 I_4) \in V_3$ and where $A_{i\a} \in \spg(3)$ is the matrix whose $i$-th diagonal $4 \times 4$ block is $\rR_\a$ and all other entries are zeros (note that we used the fact that $\sum_{\a=1}^{3} \<\rL_\a z, z'\>^2 = \sum_{\a=1}^{3} \<\rR_\a z, z'\>^2$, for any $z,z' \in \bh$). But now $T_{I_{12},S'} \in \Ker F$ by Lemma~\ref{l:hpmprep}\eqref{it:hpmprepkerF}, and so $T^2(S,S) \in \Ker F + \sT_2^1$, as required.

\smallskip

For the proof of assertion~\eqref{it:hpnop2op2}, we briefly recall the construction in~\cite[Section~3]{MN1}. Denote $H_3(\bo)$ be the Albert algebra, the Jordan algebra of $3 \times 3$ Hermitian octonion matrices. Elements of $H_3(\bo)$ have the form
  \begin{equation} \label{eq:H3O} 
    A = \left(
        \begin{array}{ccc}
          r_1 & x_3^* & x_2^* \\
          x_3 & r_2 & x_1 \\
          x_2 & x_1^* & r_3 \\
        \end{array}
      \right), \qquad x_1, x_2,x_3 \in \bo, \; r_1,r_2,r_3 \in \br,
  \end{equation}
with the Jordan multiplication given by $A \circ B = \frac12(AB+BA)$. The automorphism group of $H_3(\bo)$ is the exceptional Lie group $\Ff$. Its action preserves the trace given by $\Tr(A)=r_1+r_2+r_3$, the squared norm given by $\|A\|^2=\Tr(A^2)$, and the determinant given by $\det(A)=r_1r_2r_3 + 2\Re(x_1x_2x_3) - r_1 \|x_1\|^2 - r_2 \|x_2\|^2 - r_3\|x_3\|^2$. The squared norm and the determinant define an inner product $\ip$ and a symmetric trilinear form $\Phi$ on $H_3(\bo)$, respectively.

The Cayley projective plane $\bo P^2 = \Ff/\Spin(9)$ is the submanifold of $H_3(\bo)$, with the induced metric, defined as follows:
  \begin{equation*}
    \bo P^2 = \{X \in H_3(\bo) \, | \, \Tr(X)=1,\, \Phi(A,X,X)=0, \; \text{for all } A \in H_3(\bo)\}.
  \end{equation*}
The group $\Ff$ acts transitively on $\bo P^2$, and so $\bo P^2$ is the orbit of the element 
  \begin{equation*}
        E=\left(
        \begin{array}{ccc}
          1 & 0 & 0 \\
          0 & 0 & 0 \\
          0 & 0 & 0 \\
        \end{array}
      \right) \in H_3(\bo).
  \end{equation*}
The tangent space to $\bo P^2$ at $E$ is given by
  \begin{equation} \label{eq:TEOP2}
        T_E\bo P^2=\left\{
        \left(\begin{array}{ccc}
          0 & y^* & z^* \\
          y & 0 & 0 \\
          z & 0 & 0 \\
        \end{array}
        \right)
        \, \Big| \, y, z \in \bo \right\}.
  \end{equation}
Denote $V$ the hyperplane $\Tr A= 0$ in $H_3(\bo)$. For any $A \in V$, we define the quadratic tensor field $K_A$ on $\bo P^2$ as follows: for $X \in \bo P^2$ and $Y,Z \in T_X \bo P^2$, set $K_A(Y,Z)=\Phi(Y,Z,A)$. The tensor fields $K_A,\, A \in V$, are Killing and indecomposable when $A \ne 0$, and in particular, the map $A \mapsto K_A$ is injective.

To apply Theorem~\ref{t:quadratic}, we need to determine the top slot quadratic Killing tensor fields at $E$. The decomposable ones come from $\Sym^2(\so(9))$ and are spanned by the symmetric squares of the top slot Killing vector fields given by~\eqref{eq:veven}. To find the indecomposable ones, we note that the reflection of $\bo P^2$ at the normal space at the point $E$ is a global isometry. Under this reflection, a tensor field $K_A, \, A \in V$, is mapped to the tensor field $K_{A'}$, where $A'$ is obtained from $A$ by changing the signs of $x_2$ and $x_3$ in~\eqref{eq:H3O} to the opposite. It follows that quadratic Killing tensor fields $K_A$ which are even at $E$ are given by $A \in V$ with $x_2 = x_3 = 0$ in~\eqref{eq:H3O}. The dimension of the space of such $K_A$ is $10$. Note that they are not yet top slot at $E$, but can be made such by adding quadratic forms on the Killing vector field which are odd at $E$, as given in~\eqref{eq:vodd}, to make the value of the resulting tensor field $K_A'$ at $E$ zero. Note that $K_A'$ are still linearly independent.

We can now identify the space $T_E \bo P^2$ given in~\eqref{eq:TEOP2} with the space $\br^{16} = \bo \oplus \bo$. We introduce $9$ symmetric operators $S_i, \, i=0, \dots 8$, defined by $I_0(X)=(x_1,-x_2), \; I_i(X)=(e_i x_2^*,x_1^*e_i)$, for $i=1, \dots, 8$, for $X=(x_1,x_2) \in \bo \oplus \bo$, where $\{e_i\}$ is an orthonormal basis for $\bo$. The matrices $S_i$ satisfy the equations $S_iS_j+S_jS_i = 2\delta_{ij} I_{16}$. The isotropy algebra $\so(9)$ is spanned by the $36$ products $S_iS_j, \, 0 \le i < j \le 8$. This gives us explicitly the space of the tensors $K_2$ on $\br^{16}$, as in Theorem~\ref{t:quadratic}, corresponding to the decomposable top slot quadratic Killing tensors. That space has dimension $666 = \frac12 \cdot 36 \cdot 37$ (so there are no quadratic relations between the top slot Killing vector fields). Furthermore, the curvature tensor at $E$ is given by $R(X,P)P = 3(\|P\|^2X-\<X,P\>P) + \sum_{i=0}^{8} (\<S_i X, P\> S_i P - \<S_i P,P\>S_iX)$. Now we find all the tensors $K_2$ satisfying the conditions of assertion~\eqref{it:quadraticeqs} of Theorem~\ref{t:quadratic}. This part of the proof is computer aided; by Remark~\ref{rem:Bianchi}, a tensor $K_2$ satisfying~\eqref{eq:quadraticsigma} has $16^2(16^2-1)/12 = 5440$ independent components, and then solving the system of linear equations~\eqref{eq:quadratic21} for $K_2$, we find that the space of solutions has dimension $676$. It follows that the space of quadratic Killing tensor fields which are top slot at $E$ is spanned by the decomposable ones and by the tensor fields $K_A'$; there are no others. By Theorem~\ref{t:quadratic}\eqref{it:quadraticspan}, it follows that any quadratic Killing tensor field on $\bo P^2$ is a linear combination of the decomposable ones and of the tensors $K_A$, as claimed.

\end{document}